\numberwithin{equation}{section}
\makeatletter\@ifundefined{coloneqq}{}\makeatother
\newcommand{\ndeg}{\mathit{{D}}}
\newcommand{\hdef}{\mathit{{H}}}
\newcommand{\Z}{\mathbb{Z}}
\newcommand{\R}{\mathbb{R}}
\newcommand{\C}{\mathbb{C}}
\newcommand{\ind}{\mathop{\mathrm{ind}}\nolimits}
\newcommand{\rank}{\mathop{\mathrm{rk}}\nolimits}
\newcommand{\imm}{\mathop{\mathrm{Cob}}\nolimits}
\newcommand{\Imm}{\mathop{\mathrm{Imm}}\nolimits}
\newcommand{\SO}{\mathop{\mathit{SO}}\nolimits}
\newcommand{\co}{\mathpunct{\colon}}
\newcommand{\dic}{{\rm Dic}}
\newcommand{\pa}{\partial}
\newcommand{\na}{\natural}
\newcommand{\lo}{\looparrowright}
\newcommand{\abs}[1]{\left\lvert #1\right\rvert}
\newtheorem{thm}{Theorem}[section]
\newtheorem{lem}[thm]{Lemma}
\theoremstyle{definition}
\newtheorem{defn}[thm]{Definition}
\newtheorem{rem}[thm]{Remark}
\newtheorem{ex}[thm]{Example}
\date{\today}
\title[Immersions of $3$-sphere into $4$-space associated with Dynkin diagrams of types $A$ and $D$]
{Immersions of $3$-sphere into $4$-space associated with Dynkin diagrams of types $A$ and $D$}
\author{Shumi Kinjo}
\address{Department of Mathematical Sciences, Faculty of Science, Shinshu University, Matsumoto, 390-8621 Japan.}
\email{skinjo@math.shinshu-u.ac.jp}
\keywords{immersion; singular Seifert surface; plumbing; Dynkin diagram; Kirby calculus; dicyclic group}
\subjclass[2010]{57R42 (primary); 57R45, 55Q45 (secondary)}
\begin{document}\sloppy
\maketitle

\begin{abstract}
We construct two infinite sequences of immersions of the $3$-sphere into $4$-space, parameterized by the Dynkin diagrams of types $A$ and $D$. The construction is based on immersions of $4$-manifolds obtained as the plumbed immersions along the weighted Dynkin diagrams. We compute their Smale invariants and bordism classes of immersions by using Ekholm-Takase's formula in terms of singular Seifert surfaces. In order to construct singular Seifert surfaces for the immersions, we use the Kirby calculus.
\end{abstract}

\section{Introduction}\label{sec:intr}
Composing the universal covering $S^3\to M^3$ with an immersion $M^3 \looparrowright \R^4$ of a $3$-manifold $M$ is a well known method to construct a nontrivial immersion of $S^3$ into $\R^4$ with desired properties. For instance, Milnor has studied in \cite{milnor} immersions of the real projective space $\R P^3$ into $\R^4$ and constructed an immersion $f\circ p \co S^3 \to \R^4$ with even normal degree by composing the universal double covering $p\co S^3 \to \R P^3$ and an immersion $f\co \R P^3 \looparrowright \R^4$. 
In \cite{melikhov}, Melikhov has constructed a similar immersion $S^3 \to \R^4$ to the one as above with non-trivial stable Hopf invariant. More precisely, he has defined an immersion as the composite of the universal $8$-fold covering $q\co S^3\to Q^3$ and an embedding $e\co Q^3 \to \R^4$ of the quaternion space $Q^3=S^3/\{\pm 1,\pm i,\pm j,\pm k\}$.
Ekholm and Takase have constructed immersions of the lens space $L(2n,1)$ into $\R^4$ and computed the Smale invariants of the immersions $S^3 \to L(2n,1)\to \R^4$ by using a formula for Smale invariant in terms of singularities of a singular Seifert surface (\cite{e-t}).
In this paper we define immersions of lens spaces $L(n,1)$ and $S^3/\dic_n$ into $\R^4$, and determine the Smale invariants of the immersions $S^3\to L(n,1)\to \R^4$ and $S^3 \to S^3/\dic_n \to \R^4$, where ${\rm Dic}_n$ is a finite subgroup of $S^3$ called the \textit{dicyclic group}. Note that the dicyclic group $\dic_{n}$ has the presentation
\[
\dic_n=\bigl\langle a,x \mid a^n=x^2,xax^{-1}=a^{-1}\bigr\rangle,
\]
and has order $4n$.

$L(n,1)$ and $S^3/\dic_n$ appear naturally in the boundaries of $4$-manifolds obtained by plumbing. A {\em plumbed $4$-manifold} is a manifold obtained by gluing two or more $D^2$-bundles over $S^2$ along their restricted bundles $D^2\times D^2$ with their bases and fibers interchanged (Definition~\ref{def:plum}).
We denote $P(T,m_1,\dots,m_n)$ the manifold obtained by plumbing the $D^2$-bundles over $S^2$ of Euler class $m_i$ along the weighted Dynkin diagram $(T,m_1,\dots,m_n)$. It is known that the boundary of the plumbing along the Dynkin diagrams of types $A$ and $D$ are
\[
\pa P(A_{n-1},2,\cdots,2)=L(n,1),
\]
\[
\pa P(D_{n+2},2,\cdots,2)=S^3/{\rm Dic}_n
\]
(see \cite[\S6.2]{orlik}, \cite[\S8]{h-n-k}).

We recall the construction of some immersions from \cite{e-t}. Considering an immersion of $S^2$ into $\R^4$ with (algebraically) $k$ double points and taking its normal disk bundle, we obtain an immersion of the $2$-disk bundle over $S^2$ of even Euler class $m=2k$ into $\R^4$ (see \cite{e-t}). Plumbing copies of these immersions, we obtain an immersion of $P(T,2k_i)$ into $\R^4$. If we take $T=A_{n-1}$ or $D_{n+2}$ and $m_i=2$, we obtain immersions $\kappa_{{A},n} \co L(n,1) \looparrowright \R^4$ and $\kappa_{{D,n}} \co S^3/\dic_n \looparrowright \R^4$. The composites of the universal coverings with $\kappa_{{A},n}$ and $\kappa_{{D},n}$ are immersions of $S^3$:
\[
f_n \co S^3 \twoheadrightarrow L(n,1) \looparrowright \R^4,
\]
\[
g_n \co S^3 \twoheadrightarrow S^3/\dic_n \looparrowright \R^4.
\]

Let $\Imm[S^m,\R^N]$ denote the group of regular homotopy classes of immersions of $S^m$ into $\R^N$, where the group operation is induced by connected sum of immersions. The Smale-Hirsch h-principle implies that the group $\Imm[S^m,\R^N]$ is isomorphic to the $m{\rm th}$ homotopy group $\pi_m(V_{N,m})$ of the Stiefel manifold $V_{N,m}$ of $m$ frames in $\R^{N}$. The isomorphism is given as taking the differential at each point of $S^m$ and is called the Smale invariant.
In particular, in the case studied in this paper:
\[
\Imm[S^{3},\R^{4}]\cong \pi_3(V_{4,3})\cong \pi_3(\SO_4)\cong \Z\oplus\Z,
\]
the last isomorphism is due to the fact that $\SO_4$ is the trivial $\SO_3$-bundle over $S^3$ (see \cite[\S22]{steenrod}).

Let $\imm(m,N)$ denote the group of bordism classes of immersions of closed oriented $m$-manifolds into $\R^{N}$, where the group operation is induced by disjoint union. The bordism group $\imm(3,4)$ is isomorphic to the stable $3$-stem via  Pontryagin-Thom construction
\[
\imm(3,4)\cong \pi^{S}_3\cong \Z_{24},
\]
see \cite{freedman,wells}. In order to compute the bordism class represented by an immersion $f\co S^3 \to \R^4$ we use the following maps:
\[
\begin{CD}
\pi_3(\SO_4) @>{\iota_*}>> \pi_3(\SO_5)
@>{J}>>
\pi_8(S^5)\cong \pi_3^S,
\end{CD}
\]
where the map $\iota_* \co \pi_3(\SO_4)\cong \Z\oplus\Z \to \Z \cong \pi_3(\SO_5)$ is induced by the inclusion $\iota \co \SO_4 \to \SO_5$. It is known that $\iota_*$ is given by $(a,b)\mapsto a+2b$ (see \cite{steenrod}) and the $J$-homomorphism $J$ is surjective (see \cite[p.~180]{hughes}).
\begin{thm}\label{thm:main}
The Smale invariants of the immersions $f_{n}\colon S^{3}\to L(n,1)\to \R^{4}$ and $g_{n}\colon S^3\to S^3/{\rm Dic}_n \to\R^4$, $n>0$, are given by
\[
\Omega(f_n)=(n^2-1,0)\in\Z\oplus\Z,
\]
\[
\Omega(g_n)=(4n^2+12n-1,0)\in\Z\oplus\Z.
\]
It follows in particular that $f_n$ and $g_n$ represent the elements
\[
[f_{n}]=(n^2-1)\operatorname{mod}{24}\,\in\Z_{24}\approx\pi_{3}^{S},
\]
\[
[g_{n}]=(4n^2+12n-1)\operatorname{mod}{24}\,\in\Z_{24}\approx\pi_{3}^{S}.
\]
Therefore, $f_n$ (resp.~$g_n$) generates the stable $3$-stem if and only if $n$ is a multiple of $6$ (resp.~$3$).
\end{thm}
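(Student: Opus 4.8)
The plan is to exhibit, for each $n>0$, an explicit \emph{singular Seifert surface} for $f_n$ (and for $g_n$) --- that is, a generic smooth map $F\colon W^4\to\R^4$ from a compact oriented $4$-manifold $W$ with $\pa W=S^3$ and $F|_{\pa W}=f_n$ --- and then to compute $\Omega(f_n)$ by feeding the elementary invariants of $(W,F)$ into Ekholm--Takase's formula from \cite{e-t}.

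\emph{Constructing $W$ and $F$.} By assumption the immersions $\kappa_{A,n}$ and $\kappa_{D,n}$ extend to immersions $\widetilde\kappa_{A,n}\colon P(A_{n-1},2,\dots,2)\lo\R^4$ and $\widetilde\kappa_{D,n}\colon P(D_{n+2},2,\dots,2)\lo\R^4$ of the plumbed $4$-manifolds. A natural choice for $W$ is the cyclic branched cover $\pi\colon\widetilde P\to P$ of degree $d$ (with $d=n$ for type $A$ and $d=\abs{\dic_n}=4n$ for type $D$) branched over the plumbing configuration $B$ of $2$-spheres: since $B$ is interior, the induced covering of $\pa P$ is the universal covering $S^3\to\pa P$, so $\pa\widetilde P=S^3$, and $F:=\widetilde\kappa\circ\pi$ restricts on $\pa\widetilde P$ to $\kappa\circ p$, i.e.\ to $f_n$ (resp.\ $g_n$). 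As $\pi$ is highly non-generic (of local normal form $z\mapsto z^{d}$) along the codimension-two locus $B$, I would then perturb $F$ to a generic map relative to the boundary, using a Kirby-calculus/handlebody description of $\widetilde P$ --- branched covers of plumbings along trees admit explicit Kirby diagrams --- to control the perturbation. From that description one reads off the intersection form of $\widetilde P$ (hence $\sigma(\widetilde P)$), the Euler characteristic $\chi(\widetilde P)$, the multiple-point data of $F$ inherited from $\widetilde\kappa$ through $\pi$ (each Euler-number-$2$ disk bundle contributes one double point and each plumbing one transverse intersection), and the algebraic number of singular points created along $B$ by the perturbation.

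\emph{Applying the formula.} Substituting $W=\widetilde P$ and these invariants into the formula of \cite{e-t} writes $\Omega(f_n)$ and $\Omega(g_n)$ as explicit integer combinations of $\sigma(\widetilde P)$, $\chi(\widetilde P)$, the multiple-point count and the singular-point count. Here one uses the branched-cover identities --- since $B$ is a tree of spheres one has $\chi(B)=\chi(P)$ and the branch locus lifts homeomorphically, whence $\chi(\widetilde P)=\chi(P)$, together with the $g$-signature relation for $\sigma(\widetilde P)$ --- and the values $\sigma(P(A_{n-1},2,\dots,2))=n-1$, $\chi(P(A_{n-1},2,\dots,2))=n$, $\sigma(P(D_{n+2},2,\dots,2))=n+2$, $\chi(P(D_{n+2},2,\dots,2))=n+3$, computed from the positive definite $A_{n-1}$- and $D_{n+2}$-Cartan matrices. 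After simplification the combination should collapse to $n^2-1$ for type $A$ and to $4n^2+12n-1$ for type $D$. Separately, a framing computation --- equivalently, the characteristic number of $\widetilde P$ that the Ekholm--Takase formula assigns to the $\pi_3(\SO_3)$-coordinate --- should show that this second coordinate vanishes, because the relevant Pontryagin-type number of $\widetilde P$ is zero for these configurations.

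\emph{The corollaries, and the main obstacle.} Because $\iota_*(a,0)=a$ and $J\colon\pi_3(\SO_5)\cong\Z\to\pi_3^S\cong\Z_{24}$ is onto, hence reduction mod $24$ up to sign, we obtain $[f_n]=(n^2-1)\bmod 24$ and $[g_n]=(4n^2+12n-1)\bmod 24$; then $f_n$ generates $\Z_{24}$ iff $\gcd(n^2-1,24)=1$, and since $n^2-1$ is odd exactly when $n$ is even and is prime to $3$ exactly when $3\mid n$, this holds iff $6\mid n$, while $4n^2+12n-1$ is always odd and is prime to $3$ exactly when $3\mid n$, which gives the statement for $g_n$. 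I expect the crux to be the construction step: realizing $\widetilde P$ concretely by Kirby calculus and controlling the generic perturbation of the branched covering map along $B$, so that the singular-point count, the multiple-point data, and all signs come out correctly; everything downstream is linear algebra of Cartan matrices and the elementary number theory above.
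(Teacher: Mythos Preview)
Your outline has the right ingredients---a branched cover composed with the plumbed immersion, then the Ekholm--Takase formula---but the construction step contains two genuine gaps.

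First, for type $D$ the universal cover $S^3\to S^3/\dic_n$ is a $\dic_n$-cover, and $\dic_n$ is non-abelian for $n\ge 2$. It cannot arise as the boundary of a \emph{cyclic} branched cover of degree $4n$; you would need a $\dic_n$-equivariant branched cover, which is a different (and harder) object. The paper handles this by factoring the cover as a $\Z_{2n}$-cover followed by a $\Z_2$-cover and treating each layer separately.

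Second, and more seriously, your proposed branch locus $B$ is the entire tree of zero-sections, which is not a smooth submanifold (it has nodes at the plumbing points), so ``the cyclic branched cover along $B$'' is not defined without substantial additional work, and the perturbation analysis along a singular branch set is exactly the part you flag as ``the crux'' but do not address. The paper avoids this entirely: it first takes a boundary-connected sum with $(S^2\times S^2)_\circ$ and uses Kirby calculus to identify
\[
P(A_{n-1},2)\ \na\ (S^2\times S^2)_\circ \ \approx\ E(\xi_{-n})\ \na\ (\C P^2_\circ)^{\na n},
\]
so that the branched cover needed is the elementary fiberwise map $\Pi_n\colon E(\xi_{-1})\to E(\xi_{-n})$, $z\mapsto z^n$, branched over a \emph{single smooth} $2$-sphere disjoint from the singular set of the immersion. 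One then computes $\Omega$ for the modified immersion $f_n'=(\kappa_{A,n}\#t)\circ\pi$ and recovers $\Omega(f_n)$ by subtracting $n\cdot\Omega(t)$ via additivity under connected sum. (Remark~\ref{rem:a} explains precisely why branching directly inside $P(A_{n-1},2)$, even after one blow-up, puts the branch locus where the map to $\R^4$ is forced to be singular, making the rank-$2$ count intractable.) The type $D$ case uses an analogous Kirby identification together with an explicit extension of the $\Z_2$-cover $L(2n,1)\to S^3/\dic_n$ across a $2$-handle and a computation that this extension contributes zero to $\sharp\Sigma^2$. Your number-theoretic deduction of the corollaries from the Smale invariants is fine.
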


The paper is organized as follows. In Section~\ref{sect:sss}, we recall the formula for the Smale invariant in terms of singularities which is given by Ekholm-Takase \cite{e-t}. In Section~\ref{sect:kirby}, we summarize some basic facts about Kirby calculus, which we use to construct singular Seifert surfaces for $f_n$ and $g_n$. In Section~\ref{sect:proof}, we prove Theorem~\ref{thm:main}.

\section{Singular Seifert surfaces and Smale invariants in terms of singularities}\label{sect:sss}
\subsection{Smale invariants in terms of singularities}

Hughes and Melvin \cite{hughes-melvin} have shown that the Smale invariant of an embedding $f \co S^{4k-1} \hookrightarrow \R^{4k+1}$ is determined by the signature of a Seifert surface for $f$. Ekholm and Sz\H{u}cs \cite{e-s0} have generalized the result of \cite{hughes-melvin} to the case of immersions by introducing the notion of {\em singular Seifert surfaces}.

In \cite{e-t}, Ekholm-Takase has given a formula for the Smale invariant of an immersion $f\co S^3 \looparrowright \R^4$ by using a singular Seifert surface for $f$.
\begin{defn}\label{defn:sss}
Let $f\co S^3\to\R^4$ be an immersion.
A \textit{singular Seifert surface} for $f$ is a smooth map
$F\co V\to\R^4$
from a compact oriented $4$-manifold $V$ with $\partial V=S^{3}$ which satisfies the following conditions; 
\begin{enumerate}[(i)]
\item The restriction $F|_{\partial V}$ to the boundary equals $f$.
\item $F$ has no singularity near the boundary $\pa V$.
\item For any $p \in V$, the rank $\rank (dF_p)$ of the differential
\[
dF_p \co T_pV \to T_{F(p)}\R^4
\]
satisfies $\rank (dF_p) \ge 2$, and points $q$ with $\rank (dF_q)=2$ are isolated.
\end{enumerate}
\end{defn}

We orient the boundary of an oriented manifold following the ``outward normal first'' convention.
An immersion $f\co S^3\to\R^4$ comes equipped with a stable framing via the orientation preserving bundle isomorphism $\epsilon^1\oplus TS^3\cong f^*T\R^4$, where $\epsilon^1$ denotes the trivial line bundle. Then let $n_{f}\co S^3 \to S^3$ denote the normal vector field of $f$ which gives the positive framing of $\epsilon^1$. The \textit{normal degree} $\ndeg(f)$ is defined by:
\[
\ndeg(f)=\deg n_{f}.
\]

\begin{thm}
[Hughes \cite{hughes}, Kirby-Melvin \cite{k-m}, Takase \cite{takase}, Ekholm-Takase \cite{e-t}]\label{thm:e-t}
Let $f \co S^3 \looparrowright \R^4$ be an immersion and let $F\co V \to \R^4$ be a singular Seifert surface for $f$.
\begin{enumerate}[(a)]
\item \cite{k-m,takase}
The Hirzebruch defect $\hdef(f)$ (of the stable framing induced by $f$) satisfies
\begin{equation}\label{eq:hirzebruch}
\hdef(f)=-3\sigma(V)-\sharp\Sigma^2(F),
\end{equation}
where $\sigma(V)$ is the signature of $V$ and $\sharp\Sigma^2(F)$ is the algebraic number of rank $2$ points of $F$ (see \S 2.2). In fact, the right hand side of \eqref{eq:hirzebruch} depends only on $f$, and does not depend on the choice of singular Seifert surface.

\item \cite{hughes, e-t}
The Smale invariant of $f$ is given by
\begin{equation}\label{eq:smale}
\Omega(f)=
\left(\ndeg(f)-1,\frac{-\hdef(f)-2(\ndeg(f)-1)}{4}\right)\in\Z\oplus\Z,
\end{equation}
where $\ndeg(f)$ is the normal degree of $f$.
\end{enumerate}
\end{thm}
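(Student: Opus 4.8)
The plan is to apply Theorem~\ref{thm:e-t} to $f_n$ and $g_n$ separately. By \eqref{eq:smale} the Smale invariant is determined by the two integers $\ndeg(f)$ and $\hdef(f)$, and by \eqref{eq:hirzebruch} the Hirzebruch defect equals $-3\sigma(V)-\sharp\Sigma^2(F)$ for any singular Seifert surface $F\co V\to\R^4$. Hence the whole theorem reduces to computing three integers for well-chosen Seifert surfaces: the normal degree $\ndeg$, the signature $\sigma(V)$, and the algebraic number $\sharp\Sigma^2(F)$ of rank-$2$ points. The asserted values $\Omega(f_n)=(n^2-1,0)$ and $\Omega(g_n)=(4n^2+12n-1,0)$ then follow by arithmetic; note that the vanishing of the second coordinate is not automatic, but is equivalent to the identities $\hdef(f_n)=-2(n^2-1)$ and $\hdef(g_n)=-2(4n^2+12n-1)$, which must come out of the computation.

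First I would compute the normal degrees. Since $f_n=\kappa_{A,n}\circ p$ and $g_n=\kappa_{D,n}\circ q$ factor through the universal coverings $p\co S^3\to L(n,1)$ and $q\co S^3\to S^3/\dic_n$ of degrees $n$ and $4n$, the normal vector field of each composite is the pullback under the covering of the normal field of the plumbed immersion, so the normal degree is multiplicative along the covering (using fundamental classes, $p_*[S^3]=n[L(n,1)]$ and $q_*[S^3]=4n[S^3/\dic_n]$). Reading the remaining normal data off $\kappa_{A,n}$ and $\kappa_{D,n}$, I would obtain $\ndeg(f_n)=n^2$ and $\ndeg(g_n)=4n^2+12n$, which already give the first coordinates $n^2-1$ and $4n^2+12n-1$.

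The central step, and the main obstacle, is the construction of the singular Seifert surfaces and the computation of $\sigma(V)$ and $\sharp\Sigma^2(F)$. Conceptually I would use that the universal coverings are the boundary restrictions of branched covers of $4$-manifolds modeled on the quotient singularities whose minimal resolutions are $P(A_{n-1},2,\dots,2)$ and $P(D_{n+2},2,\dots,2)$; composing such a branched cover with the plumbed immersion $\kappa_{A,n}$ (resp.\ $\kappa_{D,n}$), suitably extended, yields a map $F\co V\to\R^4$ with $\partial V=S^3$ restricting to $f_n$ (resp.\ $g_n$). Concretely I would present $V$ by a Kirby diagram built from the plumbing and its branching, compute $\sigma(V)$ as the signature of the associated intersection form, and obtain $\sharp\Sigma^2(F)$ as the algebraic count of Whitney-umbrella points, tracking these through the Kirby moves from the double points of the model immersions of $S^2$ used to build the plumbed immersions. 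The delicate part is the bookkeeping: verifying that the construction meets Definition~\ref{defn:sss} (rank $\ge 2$ everywhere, with isolated rank-$2$ points), and correctly accounting for the contribution of the branch locus to both $\sigma(V)$ and $\sharp\Sigma^2(F)$. Since Theorem~\ref{thm:e-t}(a) guarantees that $-3\sigma(V)-\sharp\Sigma^2(F)$ depends only on $f$, I am free to simplify $V$ by Kirby moves to whichever diagram renders these counts most transparent, and I expect them to combine into $\hdef(f_n)=-2(n^2-1)$ and $\hdef(g_n)=-2(4n^2+12n-1)$, forcing the second coordinate of each Smale invariant to vanish.

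Finally, once the Smale invariants are established, the bordism classes are obtained by applying $J\circ\iota_*$. With $\iota_*(a,b)=a+2b$ and $J$ surjective onto $\pi_3^S\cong\Z_{24}$, this gives $[f_n]=(n^2-1)\bmod 24$ and $[g_n]=(4n^2+12n-1)\bmod 24$. The generation statement is then elementary: $f_n$ generates $\Z_{24}$ precisely when $\gcd(n^2-1,24)=1$, and since $24=2^3\cdot 3$ and $n^2-1=(n-1)(n+1)$ is coprime to $8$ exactly when $n$ is even and coprime to $3$ exactly when $3\mid n$, this happens iff $6\mid n$. Similarly $4n^2+12n-1\equiv 7\pmod 8$ for all $n$ and $4n^2+12n-1\equiv n^2-1\pmod 3$, so $\gcd(4n^2+12n-1,24)=1$ iff $3\mid n$, which proves the final assertion.
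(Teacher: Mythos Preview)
Your proposal does not address the stated theorem. Theorem~\ref{thm:e-t} is a general formula, quoted from \cite{hughes,k-m,takase,e-t}, expressing the Smale invariant of an \emph{arbitrary} immersion $f\co S^3\looparrowright\R^4$ in terms of $\ndeg(f)$ and $\hdef(f)$; the paper does not prove it but uses it as a tool. What you have written is a proof outline for Theorem~\ref{thm:main}, which \emph{applies} Theorem~\ref{thm:e-t} to the particular immersions $f_n$ and $g_n$. Your first sentence already says so.

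Reading your proposal as an attempt at Theorem~\ref{thm:main}, the overall strategy and the normal-degree computations $\ndeg(f_n)=n\cdot\chi(P(A_{n-1},2))=n^2$ and $\ndeg(g_n)=4n\cdot\chi(P(D_{n+2},2))=4n(n+3)$ are correct, and the closing arithmetic on $\Z_{24}$ is fine. But your ``central step'' hides a genuine obstacle that the paper confronts explicitly and you do not. The branched cover extending $S^3\to L(n,1)$ is $\Pi_n\co E(\xi_{-1})\to E(\xi_{-n})$, and $E(\xi_{-n})$ is not $P(A_{n-1},2)$: Kirby calculus gives $E(\xi_{-n})\na(\C P^2_\circ)^{\na(n-1)}\approx P(A_{n-1},2)\na\overline{\C P^2}_\circ$, and the branch locus of $\Pi_n$ lands precisely in the $\overline{\C P^2}_\circ$ summand (Remark~\ref{rem:a}). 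Since $\overline{\C P^2}_\circ$ cannot be immersed in $\R^4$, any map $\overline{\C P^2}_\circ\to\R^4$ has rank-$2$ singularities meeting that branch locus, the hypothesis of Lemma~\ref{lem:2com} fails, and $\sharp\Sigma^2(F)$ becomes intractable. The paper's remedy is a three-step detour: replace $f_n$ by $f_n'=(\kappa_{A,n}\#t)\circ\pi$, for which stabilising by $(S^2\times S^2)_\circ$ yields $P(A_{n-1},2)\na(S^2\times S^2)_\circ\approx E(\xi_{-n})\na(\C P^2_\circ)^{\na n}$ with only immersible pieces; compute $\Omega(f_n')$ from a manageable singular Seifert surface; then recover $\Omega(f_n)$ via the additivity Lemma~\ref{lem:step3}. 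The $D$-type case requires the same manoeuvre plus Lemmas~\ref{lem:a}--\ref{lem:c} and \ref{lem:purturb} to control the extra $\Z_2$-branching. Your proposal skips this detour, and without it the ``bookkeeping'' you allude to cannot be carried out. (Two smaller points: the isolated rank-$2$ singularities here are umbilics, not Whitney umbrellas; and you cannot freely simplify $V$ by arbitrary Kirby moves, since Move~1 changes the $4$-manifold and hence $\sigma(V)$.)
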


\begin{rem}\label{rem:hughes}
\begin{enumerate}[(a)]
\item In \cite{hughes}, Hughes has given the formula of the Smale invariant of immersion $f\co S^3 \looparrowright \R^4$ which is defined as the restriction of the immersion $F\co V \looparrowright \R^4$ of a compact oriented $4$-manifold $V$ with boundary $\pa V=S^3$ to the boundary. The formula \eqref{eq:smale} is a generalized version of Hughes' formula.

\item By Poincar\'{e}-Hopf index theorem, if a singular Seifert surface $F\co V \to \R^4$ for an immersion $f\co S^3\looparrowright \R^4$ can be taken as an immersion, then the normal degree $\ndeg(f)$ of $f$ satisfies
\[
\ndeg(f)=\chi(V),
\]
where $\chi(V)$ denotes the Euler characteristic of $V$ (see \cite[Theorem~2.2~(b)]{k-m}).
\end{enumerate}
\end{rem}

\subsection{Stingley's formula and Thom polynomial for umbilic singularities}\label{sbsec:thom}
Let $g \co M\to N$ be a smooth map between oriented $4$-manifold whose rank $2$ points are all isolated. In \cite{stingley}, Stingley has studied singularities of smooth maps between oriented $4$-manifolds. In particular, he gave an integer $ind_p(g)$, called the index, associated to an isolated rank $2$ point $p\in M$, and defined the $4$-dimensional Riemann-Hurwitz type formula for maps with isolated rank $2$ singularities (see \cite[\S2]{stingley}, \cite[Definition~2.3]{e-t}). We denote by $\Sigma^{2}(g)$ the submanifold of rank $2$ points of $g$ and by $\sharp\Sigma^{2}(g)$ the sum of the indices of rank $2$ points of the map $g$. 
\begin{thm}
[Stingley {\cite[Theorem~I]{stingley}}]\label{thm:stin}
Let $g\colon M\to N$ be a smooth map between oriented $4$-manifolds
with isolated rank $2$ points and assume that $M$ is closed. Then
\begin{equation}\label{eq:sigma2long}
\bigl\langle g^*p_1(N)-p_1(M),[M]\bigr\rangle=\sharp\Sigma^{2}(g),
\end{equation}
where $p_1$ denotes the first Pontrjagin class and $[M]$ is the fundamental class of $M$.
\end{thm}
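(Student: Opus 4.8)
The plan is to prove \eqref{eq:sigma2long} as a Thom-polynomial identity for the degeneracy locus of the bundle homomorphism $\phi=dg\colon TM\to g^{*}TN$. First I would interpret the right-hand side homologically: regarding $\phi$ as a section of $W=\mathrm{Hom}(TM,g^{*}TN)$, the set $\Sigma^{2}(g)$ is $\phi^{-1}(\mathcal R_{\le2})$, where $\mathcal R_{\le2}\subset W$ is the fibrewise subvariety of homomorphisms of rank $\le2$. In each fibre $\cong M_{4}(\R)$ this variety has real codimension $(4-2)^{2}=4$, so for a section transverse to it the preimage is a finite set, and Stingley's index $ind_{p}(g)$ is precisely the local intersection multiplicity of $\phi$ with $\mathcal R_{\le2}$ at $p$. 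Hence $\sharp\Sigma^{2}(g)=\langle\phi^{*}\tau,[M]\rangle$, where $\tau\in H^{4}(W,W\setminus\mathcal R_{\le2})$ is the Thom class of $\mathcal R_{\le2}$; equivalently $\sharp\Sigma^{2}(g)$ is the total obstruction, living in $H^{4}(M;\pi_{3}(\mathcal R_{\ge3}))\cong H^{4}(M;\Z)$, to deforming $\phi$ to a homomorphism of rank $\ge3$ everywhere. Here $\mathcal R_{\ge3}=M_{4}(\R)\setminus\mathcal R_{\le2}$ is $2$-connected with $\pi_{3}\cong\Z$, being the complement of a codimension-$4$ subvariety, and its generator is the local linking number with $\mathcal R_{\le2}$. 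Since both sides of \eqref{eq:sigma2long} are invariant under homotopy of $g$ and under small perturbation, I may assume $\phi$ is transverse to $\mathcal R_{\le2}$ throughout; a generic perturbation also avoids the rank-$\le1$ stratum (real codimension $9$), so the degeneracy locus is genuinely $0$-dimensional.

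It then remains to evaluate the universal class $\phi^{*}\tau$, that is, the Thom polynomial $\mathrm{Tp}$ of the corank-$\ge2$ locus for a homomorphism $E\to F$ of oriented real rank-$4$ bundles. Being of degree $4$, $\mathrm{Tp}$ is an integral linear combination of $p_{1}(E),p_{1}(F),e(E),e(F)$. I would pin down the coefficients by three observations. (i) Neither $\sharp\Sigma^{2}(g)$ nor $g^{*}p_{1}(N)-p_{1}(M)$ changes when the orientation of $N$ is reversed, whereas $e(F)=e(g^{*}TN)$ changes sign; so the coefficient of $e(F)$ vanishes, and by the duality $\phi\mapsto\phi^{*}$ (corank is preserved under transpose, interchanging the roles of $E$ and $F$) the coefficient of $e(E)$ vanishes as well, leaving $\mathrm{Tp}=a\,p_{1}(F)+b\,p_{1}(E)$. (ii) If $\phi$ is a bundle isomorphism then $\Sigma^{2}(g)=\emptyset$; testing on $g=\id_{M}$ with $M=\C P^{2}$, where $\langle p_{1}(TM),[M]\rangle=3\neq0$ while $\sharp\Sigma^{2}=0$, forces $a+b=0$, so $\mathrm{Tp}=a\,(p_{1}(F)-p_{1}(E))=a\,(g^{*}p_{1}(N)-p_{1}(M))$, using that $p_{1}(g^{*}TN-TM)=g^{*}p_{1}(N)-p_{1}(M)$ after pairing with $[M]$ (the ambiguity being $2$-torsion, which pairs to $0$). (iii) Finally $a=1$ is fixed by a single normalising computation at Stingley's standard local rank-$2$ model: the associated map $S^{3}\to\mathcal R_{\ge3}$ has degree $\pm1$ in $\pi_{3}(\mathcal R_{\ge3})\cong\Z$, and Stingley's index of that model is $\pm1$ with the matching sign. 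Combining the three steps yields $\sharp\Sigma^{2}(g)=\langle g^{*}p_{1}(N)-p_{1}(M),[M]\rangle$.

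The main obstacle is step (iii) together with the identification in the first paragraph: one must check carefully that Stingley's analytically defined index $ind_{p}(g)$ coincides, with the correct sign, with the topological local linking number of $\phi$ and $\mathcal R_{\le2}$, i.e.\ the generator of $\pi_{3}(\mathcal R_{\ge3})\cong\Z$. This requires unwinding the definition of the index from \cite[\S2]{stingley} (see also \cite[Definition~2.3]{e-t}) at the standard model and comparing the orientations of $M$, of the rank-exactly-$2$ stratum of $\mathcal R_{\le2}$, and of its normal bundle. The remaining steps are then largely formal: the homotopy and transversality reductions are standard, and once the normalisation is secured the coefficient determination in the second paragraph is an elementary bookkeeping of characteristic classes.
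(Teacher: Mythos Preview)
The paper does not prove this theorem. It is quoted as Theorem~\ref{thm:stin} with attribution to Stingley's dissertation \cite[Theorem~I]{stingley} and then used as a black box; only the special case $N=\R^{4}$ is restated afterward (Remark~\ref{rem:umbilic}) as the Thom polynomial for umbilics. There is therefore no in-paper argument to compare your proposal against.

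That said, your outline is a reasonable independent proof sketch of Thom-polynomial type, and you have correctly flagged where the real content lies: step~(iii), matching Stingley's analytically defined local index with the topological linking number in $\pi_{3}(\mathcal R_{\ge 3})\cong\Z$, is not formal and requires unpacking the definition in \cite[\S2]{stingley} or \cite[Definition~2.3]{e-t}. One small remark on step~(i): the ``duality $\phi\mapsto\phi^{*}$'' argument for killing the $e(E)$ coefficient is a bit slippery, since $(dg)^{*}$ is not the differential of any map and so need not carry Stingley's index along. A cleaner route is to reverse the orientation of $M$ instead: the paper itself records (in the proof of Lemma~\ref{lem:2com}) that this changes the sign of each $\ind_{p}(g)$, hence of $\sharp\Sigma^{2}(g)$, while $\langle e(TM),[M]\rangle=\chi(M)$ is unchanged under simultaneous reversal of $e(TM)$ and $[M]$; this forces the $e(E)$ coefficient to vanish directly.
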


\begin{rem}\label{rem:umbilic}
It is known that a smooth generic map between oriented $4$-manifolds has only stable rank $2$ singularities, called the hyperbolic and elliptic umbilics. The index of a hyperbolic umbilic is $-1$, and that of an elliptic umbilic is $+1$. When $g \co M \to N$ is not generic, some umbilic points may degenerate, and non-umbilic rank $2$ point might appear. But, by perturbing $g$ at non-umbilic rank $2$ points, we can assume that all the isolated rank $2$ points of $g$ form the umbilic singularities. $\sharp\Sigma^{2}(g)$ is unchanged by perturbation of $g$.

Let $G\co W \to \R^4$ denote a generic map of a closed oriented $4$-manifold $W$, and let $\sharp\Sigma^{2,0}(G)$ denote the sum of the indices of the umbilic points of the map $G$. The Thom polynomial for umbilic singularities of $G$ is the special case of \eqref{eq:sigma2long}:
\begin{equation}\label{eq:thom}
\sharp\Sigma^{2,0}(G)=-\bigl\langle p_1(W),[W]\bigr\rangle=-3\sigma(W),
\end{equation}
the last equation is obtained by Hirzebruch signature theorem.
\end{rem}


For composition of maps, we have the following; 
\begin{lem}\label{lem:2com}
Let $f\co L\to M$ be a branched covering between compact oriented $4$-manifolds and let $g\co M\to \R^4$ be a smooth map. Assume that these maps are nonsingular near the boundaries and the rank $2$ points are isolated. If the intersection of the branched locus of $f$ and the singular set of $g$ is empty, then
\begin{equation}\label{eq:sigma2com}
\sharp\Sigma^{2}(g\circ f)=\deg f\cdot \sharp\Sigma^{2}(g)+\sharp\Sigma^{2}(f).
\end{equation}
\end{lem}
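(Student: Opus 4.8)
The plan is to compute both sides of \eqref{eq:sigma2com} as sums of the local indices $\ind_{p}(\cdot)$ over the finitely many, interior, isolated rank $2$ points, and to pair these points up using the disjointness hypothesis. Write $B=\Sigma^{2}(f)\subset L$ for the (finite) set of rank $2$ points of $f$; since $f$ is a branched covering it is a local diffeomorphism on $L\setminus B$, and $f(B)$ is a finite subset of the interior of $M$, disjoint from $\Sigma^{2}(g)$ by hypothesis. For $p\notin B$ the differential $df_{p}$ is an isomorphism, so $\rank d(g\circ f)_{p}=\rank dg_{f(p)}$, which equals $2$ exactly when $f(p)\in\Sigma^{2}(g)$. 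For $p\in B$ the point $f(p)$ lies outside $\Sigma^{2}(g)$, so $g$ has rank $\ge 3$ there, and since $df_{p}(T_{p}L)$ meets $\ker dg_{f(p)}$ only in $0$ — as it must for $g\circ f$ to have rank $2$ at $p$ rather than less — one gets $\rank d(g\circ f)_{p}=2$. Hence
\[
\Sigma^{2}(g\circ f)=B\;\sqcup\;f^{-1}\bigl(\Sigma^{2}(g)\bigr),
\]
a disjoint union of finite sets, all interior.

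Next I would read off the index at each point, using that the index of an isolated rank $2$ point is a germ invariant, unchanged under composition on the source side with an orientation-preserving local diffeomorphism and on the target side with any local diffeomorphism — the insensitivity to the target orientation being visible from Stingley's formula \eqref{eq:sigma2long}, whose right-hand side does not see that orientation. For $p\in f^{-1}(\Sigma^{2}(g))$, the map $f$ is an orientation-preserving local diffeomorphism near $p$, so $\ind_{p}(g\circ f)=\ind_{f(p)}(g)$; moreover, since no point of $\Sigma^{2}(g)$ lies in $f(B)$, the map $f$ is an honest $\deg f$-fold covering over a neighbourhood of each $q\in\Sigma^{2}(g)$, so $q$ has exactly $\deg f$ preimages, each contributing $\ind_{q}(g)$. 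For $p\in B$ one has $\ind_{p}(g\circ f)=\ind_{p}(f)$ — immediate when $g$ is a local diffeomorphism at $f(p)$, and requiring an argument otherwise. Summing the two contributions gives $\deg f\cdot\sharp\Sigma^{2}(g)+\sharp\Sigma^{2}(f)$, which is \eqref{eq:sigma2com}.

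The step needing the most care is the identity $\ind_{p}(g\circ f)=\ind_{p}(f)$ at a branch point $p$ at which $g$ has rank exactly $3$ at $f(p)$: there one composes with the non-invertible map $dg_{f(p)}$ and must check that it induces a degree-one self-map of the space of rank-$\ge 3$ linear maps carrying Stingley's index class. I would bypass this by first perturbing $g$, rel a neighbourhood of $\partial M\cup\Sigma^{2}(g)\cup f(B)$, so that the rank $3$ locus of $g$ (of dimension $\le 3$) misses the finite set $f(B)$; such a perturbation leaves each of $\sharp\Sigma^{2}(f)$, $\sharp\Sigma^{2}(g)$, $\sharp\Sigma^{2}(g\circ f)$ unchanged by the homotopy invariance of $\sharp\Sigma^{2}$ through maps nonsingular near the boundary, and afterwards $g$ is a genuine local diffeomorphism at every point of $f(B)$. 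A cleaner, purely homological alternative avoids the local analysis entirely: apply a relative form of Stingley's formula \eqref{eq:sigma2long} to $f$, $g$ and $g\circ f$, and use the splitting
\[
(g\circ f)^{*}p_{1}(\R^{4})-p_{1}(L)=f^{*}\bigl(g^{*}p_{1}(\R^{4})-p_{1}(M)\bigr)+\bigl(f^{*}p_{1}(M)-p_{1}(L)\bigr)
\]
together with $f_{*}[L,\partial L]=\deg f\cdot[M,\partial M]$; the only cost is setting up Stingley's formula relatively, which is legitimate because $g$ and $g\circ f$ are immersions near the boundary and hence canonically trivialize the relevant Pontrjagin classes there. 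Either way, the substance of the lemma lies in the boundary/branch-point bookkeeping, not in the global identity.
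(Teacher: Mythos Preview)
Your approach is the paper's: decompose $\Sigma^{2}(g\circ f)$ as $f^{-1}(\Sigma^{2}(g)) \sqcup \Sigma^{2}(f)$ and sum local indices over each piece. Two remarks are in order.

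First, you have slightly misread the hypothesis. The phrase ``singular set of $g$'' means the full singular locus $S(g)=\{q:\rank dg_{q}<4\}$, not just $\Sigma^{2}(g)$; the paper's proof opens by restating the assumption as $f(S(f))\cap S(g)=\emptyset$. Consequently $g$ is already a genuine local diffeomorphism at every point of $f(B)$, and your entire third paragraph---the rank-$3$ perturbation argument and the relative-Stingley alternative---is unnecessary: the case you worry about is excluded by hypothesis. This also dissolves the circular-looking sentence in your first paragraph about $df_{p}(T_{p}L)$ meeting $\ker dg_{f(p)}$ trivially: once $dg_{f(p)}$ is invertible, $\rank d(g\circ f)_{p}=\rank df_{p}=2$ is immediate.

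Second, the paper is slightly more careful at the sheets over $\Sigma^{2}(g)$: rather than asserting that each preimage contributes $+\ind_{q}(g)$, it records $\ind_{q_{i}}(g\circ f)=\pm\ind_{f(q_{i})}(g)$ with sign according to whether $f$ preserves or reverses orientation near $q_{i}$, and then sums algebraically to obtain $\deg f\cdot\sharp\Sigma^{2}(g)$. For an orientation-preserving branched covering your version and the paper's coincide, but the signed formulation is the more robust one. Conversely, the paper leaves the contribution from $\Sigma^{2}(f)$ entirely implicit, so on that half your treatment is actually more explicit than the original.
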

\begin{proof}
Let $S(f)$ and $S(g)$ be singular sets of $f$ and $g$. By the assumption $f \left( S(f)\right)  \cap S(g) = \emptyset$, we obtain
\begin{equation}
\Sigma^2(g\circ f)=f^{-1}\left( \Sigma^2(g)\right) \sqcup \Sigma^2(f).
\end{equation}
Then $f$ can be assumed to be a covering near $f^{-1}\left( \Sigma^2(g)\right)$. In particular, $f$ is a local diffeomorphism near each point $p \in f^{-1}\left(\Sigma^2(g)\right)$. For any $p \in \Sigma^2(g)$, the preimage of $p$ under $f$ is a finite set: 
\[
f^{-1}(p):=\{q_1, \dots q_k \}.
\]
If the orientation of the source manifold is changed then the index of each isolated rank $2$ point changes its sign (see definition of the index \cite[\S2]{stingley}, \cite[Definition~2.3]{e-t}). Thus we have
\[
\ind_{q_i}(g\circ f)=\pm \ind_{p}(g)
\]
where the sign is $+$ (resp.\ $-$) if $f$ preserves (resp.\ reverses) the orientations near $q_i$. Henceforth, $f^{-1}\left( \Sigma^2(g)\right)$ contributes to $\sharp\Sigma^{2}(g\circ f)$ by $\deg f\cdot \sharp\Sigma^{2}(g)$. 
\end{proof}

\section{Plumbing and Kirby diagrams}\label{sect:kirby}
We will use the following notation throughout this paper. The total space of the $2$-disk bundle over $S^2$ whose Euler class is equal to $-k$ will be denoted by $E(\xi_{-k})$, and the $2$-disk bundle over $\R P^2$ of Euler number $-k$ will be denoted by $E(\eta_{-k})$. Then an orientation-preserving double cover $\rho^* \colon E(\xi_{-2k})\to E(\eta_{-k})$ can be obtained by lifting the double cover $\rho \co S^2\to \R P^2$. It is known that the boundary of $E(\xi_{-k})$ is the lens space $L(k,1)$ and the boundary of $E(\eta_{-k})$ is $S^3/{\rm Dic}_k$ (see \cite[\S6.2]{orlik}, \cite[\S8]{h-n-k}). Let $M_{\circ}$ denote a complement of an open disk in a closed oriented $4$-manifold $M$.

\subsection{Kirby diagrams}
A {\em Kirby diagram} is a description of a $4$-dimensional $2$-handlebody consisting of one $0$-handle and a number of $1$- and $2$-handles. If the attaching maps of $1$- and $2$-handles are specified, then there exists essentially one and only one way to attach $3$-handles and $4$-handle to the $2$-handlebody to form a closed $4$-manifold. The attaching spheres of $1$- and $2$-handles are depicted by pair of $3$-balls and framed links in $S^3$, respectively. Often it will be convenient to depict $1$-handles by dotted circles as in Figure~\ref{fig:1-handle}. For the Kirby calculus refer to \cite{g-s}. 
\begin{figure}[]
\begin{center}
\includegraphics{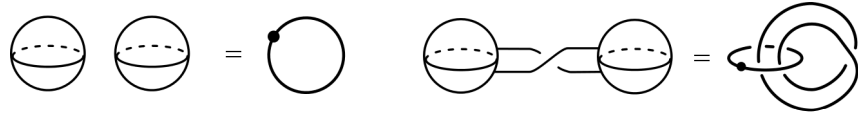}
\end{center}
\caption{Attaching spheres of $1$-handles}\label{fig:1-handle}
\end{figure}

\begin{defn}\label{def:k_move}
We denote by $W_{L}$ the $4$-dimensional $2$-handlebody represented by a framed link $L$ in $S^3$. Its boundary is denoted by $M_{L}:= \pa W_{L}$.
\end{defn}

The following two elementary operations on a framed link $L$, called {\em Kirby moves}, do not change the $3$-manifold $M_{L}$ (see \cite{kirby0}). 
\begin{enumerate}
\item[\bf{Move~{1}}] Adding to or deleting from $L$ an unknotted component with framing $\pm 1$ which is separated from the other components by a $2$-sphere. (see Figure~\ref{fig:k_move1})
\item[\bf{Move~{2}}] Replacing a component of $L$ with its connected sum with  a push-off of another component along the boundary of some band in $S^3$. (see Figure~\ref{fig:k_move2}, where $n_i$ denote the framing of $L_i$.)
\end{enumerate} 

\begin{rem}\label{rem:k_move}
\begin{enumerate}[(a)]
\item Adding a disjoint unknotted $\pm 1$ component to the link $L$  corresponds to taking connected sum of the $4$-manifold $W_{L}$ and $\C P^2$ or $\overline{\C P^2}$. This operation is called {\em blow-up} and the inverse operation is called {\em blow-down}.
\item Move~2 corresponds to changing the handlebody presentation by sliding one handle over another.
\end{enumerate}
\end{rem}

\begin{figure}[]
\begin{center}
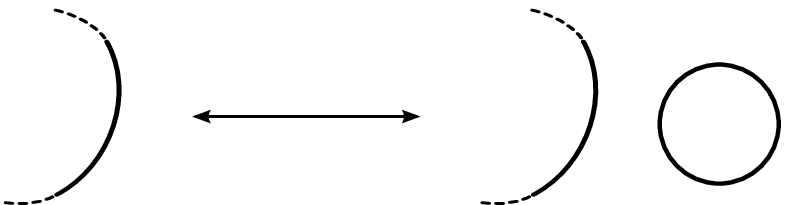
\end{center}
\caption{Move~1}\label{fig:k_move1}
\end{figure}

\begin{figure}[]
\begin{center}
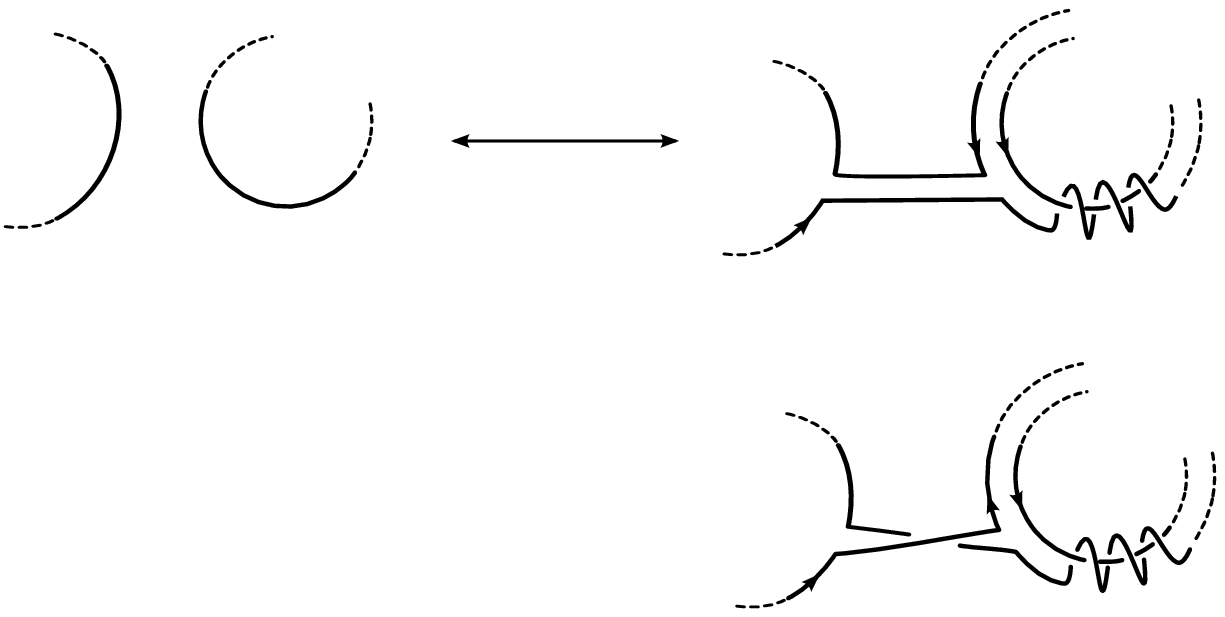
\end{center}
\caption{Move~2}\label{fig:k_move2}
\end{figure}

\begin{ex}\label{ex:handle_slide}
An unknotted circle with framing $\pm 1$ can be moved away from the rest of the link $L$ with the effect as drawn in Figure~\ref{fig:handle_slide}, by sliding arcs. In Figure~\ref{fig:handle_slide}, if the arcs belong to different components of $L$, their framings increase by $\mp 1$ each; if the arcs belong to a single component, the framing change by either $0$ or $\mp 4$.
\begin{figure}[]
\begin{center}
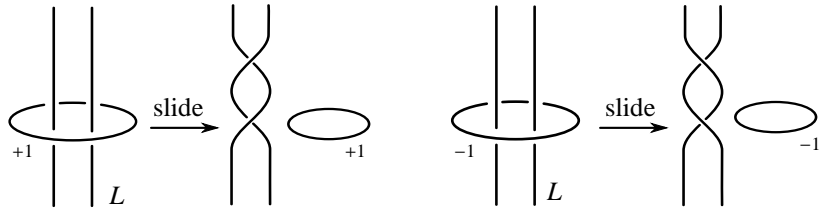
\end{center}
\caption{Handle sliding}\label{fig:handle_slide}
\end{figure}
\end{ex}

\begin{ex}\label{ex:k_move}
In Figure~\ref{fig:ex_kirby} (a), an unknotted circle with framing $k$ represents $E(\xi_{k})$. In particular, $\C P^2_{\circ}$ and $\overline{\C P^2}_{\circ}$ correspond to $k= \pm 1$. The $4$-manifolds $E(\eta_{k})$, $(S^2 \times S^2)_{\circ}$ and $(S^2 \tilde{\times} S^2)_\circ$ are drawn as Figure~\ref{fig:ex_kirby} (b), (c) and (d), respectively.
\begin{figure}[]
\begin{center}
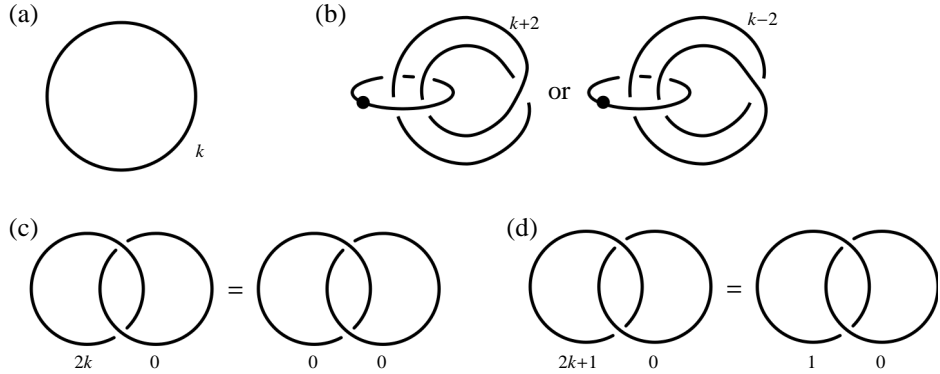
\end{center}
\caption{Examples; (a) $E(\xi_{k})$, (b) $E(\eta_{k})$, (c) $(S^2 \times S^2)_{\circ}$ and (d) $(S^2 \tilde{\times} S^2)_\circ$}\label{fig:ex_kirby}
\end{figure}
\end{ex}

\subsection{Coverings}\label{subsec:cover}
In some cases, a branched covering $X^4 \to Y^4$ can be picturized by using Kirby diagrams (see \cite[\S6.3]{g-s}). One of the simplest cases is when $f$ is  an unramified $\Z_d$-covering and $Y$ is drawn as in Figure~\ref{fig:covering_2}. 
In this case, we identify $D^4\cup 1$-handle with $S^1 \times D^3$, and there is an obvious $d$-fold covering $S^1 \times D^3 \to S^1 \times D^3$ given by $(z,w)\mapsto (z^d,w)$. Each of the remaining handles of $Y$ lifts to $d$ handles of $X$ (Figure~\ref{fig:covering_2} is in the case of $d=3$). The blackboard framing on each knot lift to the blackboard framing in each of the $d$ corresponding knots in the diagram for $X$ (see \cite[\S 6.3]{g-s}).
\begin{figure}[]
\begin{center}
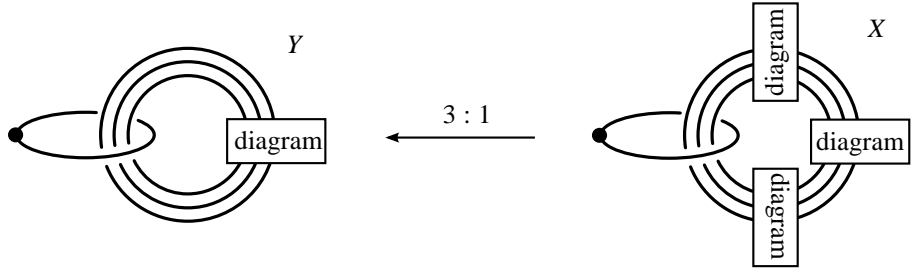
\end{center}
\caption{Kirby diagrams representing a $\Z_3$-covering $X \to Y$}\label{fig:covering_2}
\end{figure}

\begin{ex}\label{ex:covering}
Let $Y$ be a $D^2$-bundle over $\R P^2$ with Euler number $-k$, drawn as in Figure~\ref{fig:ex_kirby} (b). The double cover $X$ over $Y$ is $E(\xi_{-2k})$, the $D^2$-bundle over $S^2$ with Euler class $-2k$ (see Figures~\ref{fig:covering_dic} and \ref{fig:covering_dic_2}). We call the $1$-handle with a $2$-handle attached, drawn in Figure~\ref{fig:covering_dic_2} (d), a {\em cancelling pair}. Since the core of the $1$-handle and the belt sphere of the $2$-handle intersect transversally at only one point, these handles cancel out with each other. 
\begin{figure}[]
\begin{center}
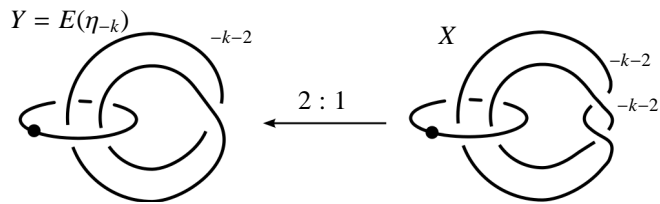
\end{center}
\caption{A $\Z_2$-cover $X$ over a $D^2$-bundle over $\R P^2$}\label{fig:covering_dic}
\end{figure}
\begin{figure}[]
\begin{center}
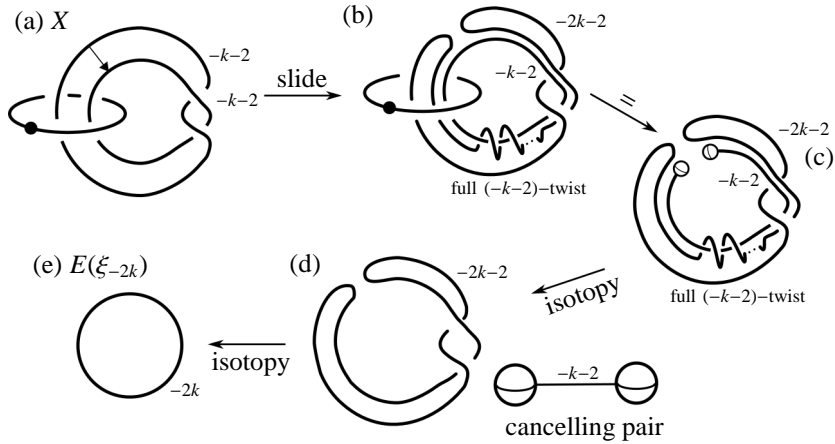
\end{center}
\caption{$X=E(\xi_{-2k})$}\label{fig:covering_dic_2}
\end{figure}
\end{ex}

\begin{ex}\label{ex:covering_2}
Let $\Pi_k\co E(\xi_{-1})\to E(\xi_{-k})$ be a map of $D^2$-bundles over $S^2$, given by $z \mapsto z^k$ on each fiber (see \cite[\S3]{e-t}). Let $\Phi \co E(\xi_{-2k})\to E(\eta_{-k})$ be the $\Z_2$-covering defined in Example~\ref{ex:covering}. Composite of $\Phi \co E(\xi_{-2k})\to E(\eta_{-k})$ with $\Pi_{2k}\co E(\xi_{-1})\to E(\xi_{-2k})$ is a branched $\dic_k$-covering over $E(\eta_{-k})$.

 In fact, a dicyclic group $\dic_k$ can be defined as the subgroup of the group of unit quaternions generated by
\[
a=e^{\frac{i \pi}{k}},~x=j.
\]
In addition, the dicyclic group $\dic_k$ is an extension of the cyclic group of order $2$ by a cyclic group of order $2k$. This extension can be expressed as the exact sequence
\begin{equation}\label{eq:exact}
\xymatrix{
\{1\}\ar[r]&\Z_{2k}\ar[r]&\dic_k\ar[r]&\Z_2\ar[r]&\{1\},
}
\end{equation}
where the map 
\[
\Z_{2k} \longrightarrow \dic_k=\bigl\langle a,x \mid a^k=x^2,xax^{-1}=a^{-1}\bigr\rangle
\]
is defined by sending the generator of $\Z_{2k}$ to the first generator $a$ of $\dic_n$. Since $\Z_{2k}$ is a normal subgroup of $\dic_k$ and since the sequences \eqref{eq:exact} is exact, a $\Z_2$-covering 
\begin{equation}\label{eq:Z_2}
S^3/\Z_{2k}=L(2k,1) \xrightarrow{\Z_2 \text{-covering}} S^3/\dic_k
\end{equation}
is uniquely defined. A $\dic_k$-covering $S^3 \to S^3/\dic_k$ is represented by the composite of the $\Z_{2k}$-covering and $\Z_2$-covering. Restricting $\Phi$ to the boundary $\pa E(\xi_{-2k})=L(2k,1)$ we obtain the same $\Z_2$-covering as \eqref{eq:Z_2}. Thus, the following diagram is commutative; 
\[\xymatrix{
S^3=L(1,1)=\pa E(\xi_{-1})\ar[rr]^-{\text{$\Z_{2k}$-covering}}\ar[rrd]_-{\text{$\dic_k$-covering}}&&L(2k,1)=\pa E(\xi_{-2k})\ar[d]^-{\rho^*}\ar[rr]^-{\text{$S^1$-bundle}}&&S^2\ar[d]^-{\rho}\\
&&S^3/\dic_k = \pa E(\eta_{-k}) \ar[rr]^-{\text{$S^1$-bundle}}&&{\R}P^2,
}\]
where $\rho^*$ is an orientation-preserving double cover obtained by lifting the double cover $\rho \co S^2 \to \R P^2$. 
\end{ex}

\subsection{Plumbing}
\begin{defn}\label{def:plum}
Let $E_1\to S^n_1$ and $E_2\to S^n_2$ be two oriented $n$-disk bundles over $S^n$. Let $D^n_i \subset S^n_i$ be embedded $n$-disks in the base spaces and let
\[
\varphi_i \co D^n_i \times D^n \to E_i|_{D_i^n}
\]
be trivializations of the restricted bundles $E_i|_{D_i^n}$ for $i=1,2$. A \textit{plumbing} of $E_1$ and $E_2$ is a $2n$-dimensional manifold obtained by identifying the points $\varphi_1(x,y)$ and $\varphi_2(y,x)$ for each $(x,y)\in D^n \times D^n$ in the disjoint union of $E_1$ and $E_2$. A plumbing is a smooth manifold with corners. However, there is a canonical way to smooth the corners, so we may assume a plumbing is a smooth $4n$-manifold with boundary.
\end{defn}

Now, we consider $4$-dimensional plumbing. Let $(A_n,m_1,\dots,m_n)$ and $(D_n,m_1,\dots,m_n)$ stand for the following weighted Dynkin diagrams:
\[
\xygraph{
\bullet ([]!{+(0,-.3)} {m_1}) - [r]
\bullet ([]!{+(0,-.3)} {m_2}) - [r] \cdots - [r]
\bullet ([]!{+(0,-.3)} {m_{n - 1}}) - [r]
\bullet ([]!{+(0,-.3)} {m_n})
}
\]
\[
\xygraph{
\bullet ([]!{+(0,-.3)} {m_1}) - []!{+(1,-.5)}
\bullet ([]!{+(0,-.3)} {m_3}) (
- []!{+(-1,-.5)} \bullet ([]!{+(0,-.3)} {m_2}),
- [r] \bullet ([]!{+(0,-.3)} {m_4})- [r] \dots - [r]
\bullet ([]!{+(0,-.3)} {m_{n-1}}) - [r]
\bullet ([]!{+(0,-.3)} {m_{n}}))
}
\]
\begin{defn}\label{def:plum2}
{\em The plumbing of $E(\xi_{m_1}), \dots ,E(\xi_{m_n})$ along the weighted Dynkin diagram $(A_n,m_1,\dots,m_n)$ (or $D_n$)} is the $4$-manifold obtained by plumbing $E(\xi_{m_i})$ in such a way as the edges of $A_n$ (or $D_n$) indicate. The resulting manifold is denoted as $P(A_n,m_1,\dots,m_n)$ (or $P(D_n,m_1,\dots,m_n)$).
\end{defn}
We can draw the Kirby diagrams representing any plumbing of $E(\xi_{m_1}), \dots ,E(\xi_{m_n})$ along a weighted Dynkin diagram (see \cite[\S4.6]{g-s}). For instance, the plumbing along the weighted Dynkin diagram $(D_5,m_1,\dots,m_n)$ is drawn in Figure~\ref{fig:D_5}.
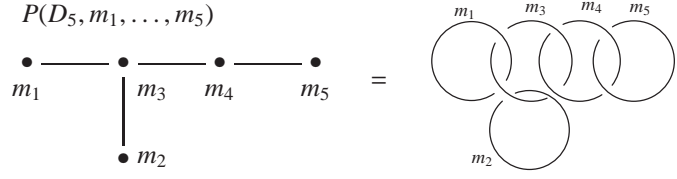
\begin{figure}[]
\begin{center}
\unitlength 0.1in
\begin{picture}( 34.7000,  9.4600)(  9.8000,-17.7600)
%
{\color[named]{Black}{%
\special{pn 8}%
\special{ar 3710 1210 210 210  4.0977260  0.6435011}%
}}%
%
{\color[named]{Black}{%
\special{pn 8}%
\special{ar 3960 1210 210 210  4.2895351  0.6435011}%
}}%
%
{\color[named]{Black}{%
\special{pn 8}%
\special{ar 3960 1210 210 210  0.9928944  3.8899707}%
}}%
%
{\color[named]{Black}{%
\special{pn 8}%
\special{ar 4240 1210 210 210  4.1590947  3.6901421}%
}}%
%
{\color[named]{Black}{%
\special{pn 8}%
\special{ar 3700 1570 206 206  4.3484100  3.9614599}%
}}%
%
{\color[named]{Black}{%
\special{pn 8}%
\special{ar 3400 1210 206 206  0.9685090  0.5070985}%
}}%
%
{\color[named]{Black}{%
\special{pn 8}%
\special{ar 3710 1210 210 210  1.1760052  3.6052403}%
}}%
\put(34.3000,-9.9000){\makebox(0,0)[rb]{$\sb{m_1}$}}%
\put(35.2000,-17.0000){\makebox(0,0)[rt]{$\sb{m_2}$}}%
\put(36.8000,-9.7000){\makebox(0,0)[lb]{$\sb{m_3}$}}%
\put(39.7000,-9.6000){\makebox(0,0)[lb]{$\sb{m_4}$}}%
\put(42.2000,-9.7000){\makebox(0,0)[lb]{$\sb{m_5}$}}%
\put(9.8000,-11.5000){\makebox(0,0)[lt]{\xygraph{\bullet ([]!{+(0,-.3)} {m_1}) - [r] \bullet ([]!{+(.3,-.3)} {m_3}) (- [d] \bullet ([]!{+(.3,0)} {m_2}), - [r] \bullet ([]!{+(0,-.3)} {m_4}) - [r] \bullet ([]!{+(0,-.3)} {m_5}))}}}%
\put(29.2000,-13.2000){\makebox(0,0){$=$}}%
\put(10.7000,-10.3000){\makebox(0,0)[lb]{${P(D_5,m_1,\dots,m_5)}$}}%
\end{picture}%
\end{center}
\caption{$D_5$-plumbing}\label{fig:D_5}
\end{figure}

\section{Proof of main theorem}\label{sect:proof}
We recall the construction of immersions $f_n$ and $g_n$. Consider the immersion of $E(\xi_{2})$ into $\R^4$ \cite{e-t}. Plumbing copies of this immersion along $A_{n-1}$ or $D_{n+2}$, we obtain immersions $P(A_{n-1},2):=P(A_{n-1},2,\cdots,2)$ and $P(D_{n+2},2):=P(D_{n+2},2,\cdots,2)$ into $\R^4$. Restricting them to the boundaries, we obtain immersions:
\begin{align*}
\kappa_{{A},n}\co & \pa P(A_{n-1},2)=L(n,1) \lo \R^4,\\
\kappa_{{D},n}\co & \pa P(D_{n+2},2)=S^3/\dic_n \lo \R^4.
\end{align*}
The immersions $f_n$ and $g_n$ are the composites of the universal coverings with $\kappa_{{A},n}$ and $\kappa_{{D},n}$:
\begin{align*}
f_n \co & S^3 \twoheadrightarrow L(n,1) \looparrowright \R^4,\\
g_n \co & S^3 \twoheadrightarrow S^3/\dic_n \looparrowright \R^4.
\end{align*}
In the proof, we use Kirby diagrams to construct manageable singular Seifert surfaces (that is, maps whose umbilic points can be easily counted) for immersions from $S^3$ into $\R^4$.

\subsection{Type $A$}\label{subsec:A}
The proof is divided into three steps:
\begin{enumerate}[(1)]
\item We construct an immersion $f_n' \co S^3 \looparrowright \R^4$ which is obtained by modifying the immersion $f_n$ and for which a manageable singular Seifert surface can be constructed.
\item We compute $\Omega(f_n')$.
\item To compute $\Omega(f_n)$, we find the difference of $\Omega(f_n)$ and $\Omega(f_n')$.
\end{enumerate}

\textit{Step~1.}
Since constructing a manageable singular Seifert surface for $f_n$ is difficult (see Remark~\ref{rem:a}), instead we construct a singular Seifert surface for an immersion $f_n'$ that is obtained by modifying $f_n$. Let $\na$ denote the boundary connected sum. The $4$-manifold $P(A_{n-1},2) \na (S^2 \times S^2)_{\circ}$ is diffeomorphic to the boundary connected sum of $E(\xi_{-n})$ and $n$ copies of $\C P^2_{\circ}$ (see Figure~\ref{fig:a_01});
\begin{equation}\label{eq:a_01}
P(A_{n-1},2) \na (S^2 \times S^2)_{\circ} \approx E(\xi_{-n})\na (\C P^2_{\circ})^{\na n}.
\end{equation}

\begin{figure}[]
\begin{center}
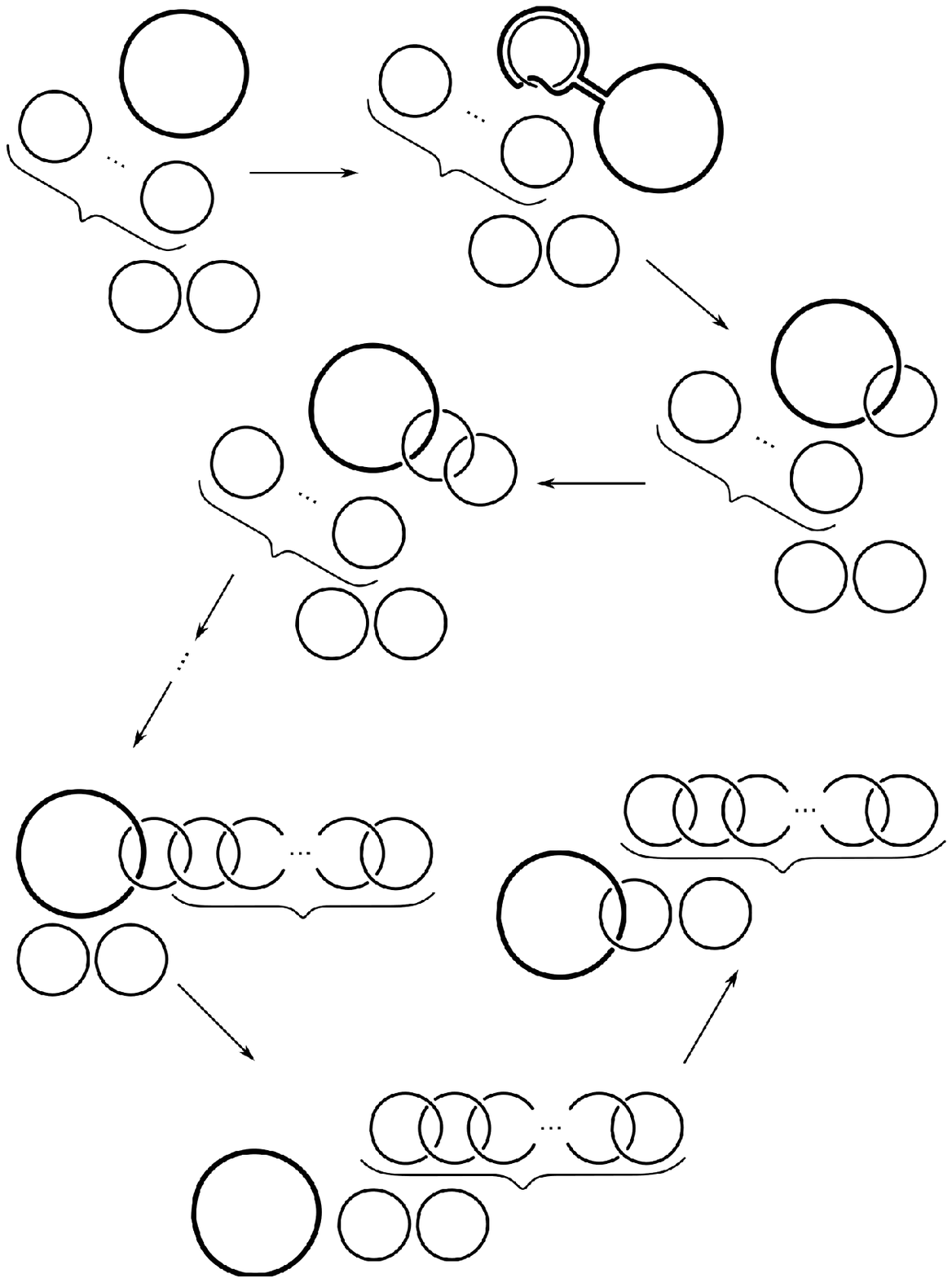
\end{center}
\caption{Proof of \eqref{eq:a_01}}\label{fig:a_01}
\end{figure}

In Example~\ref{ex:covering_2}, we defined the map $\Pi_n\co E(\xi_{-1})\to E(\xi_{-n})$. The map $\Pi_n$ is a branched $\Z_n$-covering with the zero-section of $E(\xi_{-n})$ being its branched locus: we can define the branched $\Z_n$-covering
\[
\Pi'_{A,n} \co E(\xi_{-1})\na (\C P^2_{\circ})^{\na n^2} \to E(\xi_{-n})\na (\C P^2_{\circ})^{\na n}
\]
as the connected sum of $\Pi_n$ and the trivial $\Z_n$-covering $(\C P^2_{\circ})^{\na n^2}\to (\C P^2_{\circ})^{\na n}$. It is easy to see that $(S^2 \times S^2)_{\circ}$ can be immersed in $\R^4$ and we choose one of them, denoted as $T \co (S^2 \times S^2)_{\circ}\looparrowright \R^4$. Let 
\[
K'_{A,n}\co P(A_{n-1},2) \na (S^2 \times S^2)_{\circ} \lo \R^4
\]
be an immersion defined as the connected sum of the immersion $K_{A,n}\co P(A_{n-1},2) \looparrowright \R^4$ constructed in the beginning of this section and $T \co (S^2 \times S^2)_{\circ}\looparrowright \R^4$. We obtain the map $F_n' \co E(\xi_{-1})\na (\C P^2_{\circ})^{\na n^2} \to \R^4$ by composing $\Pi'_{A,n}$ and $K'_{A,n}$;
\begin{multline*}
F_n' := K'_{A,n} \circ \Pi'_{A,n} \co E(\xi_{-1})\na (\C P^2_{\circ})^{\na n^2} \xrightarrow{\Pi'_{A,n}}  E(\xi_{-n})\na (\C P^2_{\circ})^{\na n}\\
 \approx P(A_{n-1},2) \na (S^2 \times S^2)_{\circ} \stackrel{K'_{A,n}}{\looparrowright}  \R^4.
\end{multline*}
Since the boundary of $(S^2 \times S^2)_{\circ}$ is $S^3$, $\pa \left(P(A_{n-1},2)\na (S^2 \times S^2)_{\circ}\right)$ is $L(n,1)$. Therefore $F_n'$ is a singular Seifert surface for the immersion 
\[
f_n' \co S^3 \to \pa \left(P(A_{n-1},2)\na (S^2 \times S^2)_{\circ}\right)=L(n,1) \looparrowright \R^4.
\]
The situation is summarized in the following commutative diagram; 
\[\xymatrix{
S^3 \ar@/^9mm/[rrrr]^(.65){f'_n} \ar@{^{(}->}[d] \ar[rr]^(.35){\text{universal covering}}&&\pa \left(P(A_{n-1},2)\na (S^2 \times S^2)_{\circ}\right)=L(n,1)\ar@{^{(}->}[d] \ar[rr]^(.65){\kappa'_{A,n}}&&\R^4\\
E(\xi_{-1})\na (\C P^2_{\circ})^{\na n^2} \ar@/^-20mm/[rrrru]_(.65){F'_n} \ar[rr]_ (.35){\Pi'_{A,n}}&&E(\xi_{-n})\na (\C P^2_{\circ})^{\na n} \approx P(A_{n-1},2) \na (S^2 \times S^2)_{\circ}\ar[rru]_(.65){K'_{A,n}}
}\]
The immersion $f_n'$ is defined as a modification of the immersion $f_n$. More precisely we replace $\kappa_{{A},n}$ with $\kappa'_{{A},n} := \kappa_{A,n} \# t$.

\begin{rem}\label{rem:a}
In Figure~\ref{fig:a_01}, the transformation from (a) to (f) proves 
\begin{equation}\label{eq:a_02}
E(\xi_{-n})\na (\C P^2_{\circ})^{\na (n-1)} \approx P(A_{n-1},2)\na \overline{\C P^2}_{\circ}.
\end{equation}
Since there is a covering $E(\xi_{-1})\to E(\xi_{-k})$, it seems to be possible that we can obtain a covering over $P(A_{n-1},2)$ by deleting one $\overline{\C P^2}$ from \eqref{eq:a_02}, but it is not a ``good'' covering. The reason for that is that the branch locus of the branched covering $E(\xi_{-1})\to E(\xi_{-n})$ is included in $\overline{\C P^2}$ corresponding to the unknotted component with framing $-1$ (the thickened circle in Figure~\ref{fig:a_01} (f)), and because $\overline{\C P^2}_{\circ}$ cannot be immersed in $\R^4$ the branched locus of $E(\xi_{-1})\to E(\xi_{-n})$ should intersect the singular set of the map $\overline{\C P^2}_{\circ} \to \R^4$, and it would become difficult to examine indices of rank $2$ points of singular Seifert surface $E(\xi_{-1})\na (\C P^2_{\circ})^{\na n(n-1)} \to E(\xi_{-n})\na (\C P^2_{\circ})^{\na (n-1)} \approx P(A_{n-1},2)\na \overline{\C P^2}_{\circ}\to \R^4$. In contrast, as we will see in step 2, it is possible to count the umbilic points of the singular Seifert surface $F_n'$ for $f_n'$.

Consider the connected sum of $P(A_{n-1},2)\na \overline{\C P^2}_{\circ}$ and two $\C P^2$'s. Since $\C P^2 \# \C P^2 \# \overline{\C P^2}$ is diffeomorphic to $(S^2 \times S^2) \# \C P^2$ (see \cite[Corollary~4.3]{kirby}), we obtain Figure~\ref{fig:a_01} (g). Then deleting one $\C P^2$ in (a) and (g), we have \eqref{eq:a_01}.
\end{rem}

\textit{Step~2.}
We compute the Smale invariant of $f_n'$. By Remark~\ref{rem:hughes} (b), the normal degree of the immersion $\kappa'_{A,n} \co \pa \left( P(A_{n-1},2)\na (S^2 \times S^2)_{\circ} \right)\looparrowright \R^4$ equals
\[
\chi\left(P(A_{n-1},2)\na (S^2 \times S^2)_{\circ}\right)=\chi\left(P(A_{n-1},2)\# (S^2\times S^2)\right)=n+2.
\]
Since $S^3 \to L(n,1)$ has degree $n$, the composite $f_n' \co S^3 \to \pa P(A_{n-1},2)\# S^3=L(n,1) \looparrowright \R^4$ has normal degree
\[
\ndeg(f_n')=n(n+2)
\]
(see also \cite[\S{IV}]{milnor}, \cite[Proof of Theorem~1.1]{e-t}).

The signature of $E(\xi_{-1})\na (\C P^2_{\circ})^{\na n^2}$ is
\[
\sigma\left(E(\xi_{-1})\na (\C P^2_{\circ})^{\na n^2}\right)=\sigma\left(\overline{\C P^2}_{\circ} \na (\C P^2_{\circ})^{\na n^2}\right)=n^2-1
\]
by Novikov additivity. Furthermore, since $K'_{A,n} \co P(A_{n-1},2)\na (S^2 \times S^2)_{\circ} \lo \R^4$ is an immersion and since $\Pi'_{A,n}$ is an unramified covering if restricted over $\C P^2_{\circ}$ (namely, there are no rank $2$ points on the copies of $\C P^2_{\circ}$), the algebraic number of rank $2$ points of the map $F_n'=K'_{A,n} \circ \Pi'_{A,n}$ is
\[
\sharp\Sigma^2(F_n')=\sharp\Sigma^2(K'_{A,n} \circ \Pi'_{A,n})=\sharp\Sigma^2(\Pi'_{A,n})=-(n^2-1)
\]
here the last equality follows from \cite[\S3,~Remark~2.4]{e-t} (in \cite{e-t}, the map ${\Pi'_{A,n}}|_{E(\xi_{-1})}$ is denoted as $\Pi_n$). Thus the Hirzebruch defect of $f_n'$ is
\[
\hdef(f_n')=-3\sigma \left(E(\xi_{-1})\na (\C P^2_{\circ})^{\na n^2}\right)-\sharp\Sigma^2(F_n')=-2n^2+2
\]
by \eqref{eq:hirzebruch}. Substituting these values into the formula \eqref{eq:smale}, we have
\[
\Omega(f_n')=\left(n^2+2n-1, \frac{-(-2n^2+2)-2(n^2+2n-1)}{4} \right)=(n^2+2n-1,-n).
\]

\textit{Step~3.}
To determine the Smale invariant of $f_n$, we compute the difference of Smale invariants of $f_n$ and $f_n'$. 

\begin{lem}\label{lem:step3}
Let $M$ be a $3$-manifold of the form $S^3/\Gamma$ where $\Gamma$ is a finite subgroup of $S^3$ and let $\kappa \co S^3/\Gamma=M \lo \R^4$ be an immersion. Let $\pi \co S^3 \to M$ denote the universal $\Gamma$-cover which has degree $n=\abs{\Gamma}$. Consider the following two immersions $S^3 \lo \R^4$:
\begin{enumerate}[(a)]
\item 
Composite of $\pi$ with $\kappa$;
\[
\alpha := \kappa \circ \pi \co S^3\to S^3/\Gamma=M\looparrowright \R^4.
\]

\item 
Composite of the connected sum of $\pi$ and $n$ copies of identity map of $S^3$ with the connected sum of $\kappa$ and another immersion $\beta \co S^3 \lo \R^4$;
\[
\alpha ':= (\kappa \# \beta)\circ (\pi \# (id)^{\# n} ) \co  S^3\# (S^3)^{\# n} \to (S^3/\Gamma) \# S^3 \lo \R^4.
\]
\end{enumerate}
Then, the difference of the Smale invariant of $\alpha$ and $\alpha'$ is
\[
\Omega(\alpha)-\Omega(\alpha')=n\cdot \Omega(\beta).
\]
\end{lem}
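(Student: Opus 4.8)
The plan is to isolate the effect of the connected summand $\beta$, and then to read off the difference of Smale invariants from Theorem~\ref{thm:e-t}, using that both the normal degree and the Hirzebruch defect behave additively under connected sum.

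First I would make the geometric picture precise. Forming $\kappa\#\beta$ amounts to deleting an open ball $B\subset M$ and splicing in (most of) $\beta$ through a connect-sum tube; since $\pi$ restricts over $B$ to $n$ disjoint balls $B_{1},\dots,B_{n}$, each carried diffeomorphically onto $B$, the modified covering $\pi\#(id)^{\# n}$ is again an $n$-fold covering of $M\#S^{3}=M$, and precomposing $\kappa\#\beta$ with it splices a copy of $\beta$ over each $B_{i}$. Hence $\alpha'$ is regular homotopic to the connected sum of $\alpha=\kappa\circ\pi$ with $n$ copies of $\beta$, the orientations of the $n$ summands being the ones induced by the covering. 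The only point needing care here is writing down this regular homotopy: away from $B_{1}\cup\dots\cup B_{n}$ the maps $\alpha$ and $\alpha'$ literally agree, and inside each $B_{i}$ the discrepancy is exactly the standard connect-sum construction applied to $\beta$, so once the covering over the connect-sum region is understood this is routine.

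Granting the identification of $\alpha'$ with $\alpha$ connect-summed with $n$ copies of $\beta$, I would conclude via Theorem~\ref{thm:e-t}. Boundary-connected-summing a singular Seifert surface for $\alpha$ with $n$ copies of one for $\beta$ yields a singular Seifert surface for $\alpha'$; by Novikov additivity of the signature and the obvious additivity of $\sharp\Sigma^{2}$, formula \eqref{eq:hirzebruch} gives $\hdef(\alpha')=\hdef(\alpha)+n\,\hdef(\beta)$, while pulling back normal vector fields along the degree-$n$ coverings (together with the standard $-1$ correction of $\ndeg$ under connected sum) gives $\ndeg(\alpha')-\ndeg(\alpha)=n\bigl(\ndeg(\beta)-1\bigr)$. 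Feeding these into \eqref{eq:smale} and keeping track of the orientation signs on the $n$ summands then yields the asserted formula $\Omega(\alpha)-\Omega(\alpha')=n\cdot\Omega(\beta)$. Equivalently, one may simply invoke that $\Omega$ is an isomorphism from $\Imm[S^{3},\R^{4}]$, with connected sum as the group law, onto $\Z\oplus\Z$, so that $\Omega$ of an $n$-fold connect-sum equals $n$ times $\Omega$.

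The main obstacle is the first step: verifying that $(\kappa\#\beta)\circ(\pi\#(id)^{\# n})$ is genuinely regular homotopic to $\alpha$ with exactly $n$ copies of $\beta$ attached, and pinning down the orientations of those $n$ summands so that the sign in the final equality comes out as stated. Everything after that is bookkeeping with the formulas of Theorem~\ref{thm:e-t} and the additivity of $\sigma$, $\sharp\Sigma^{2}$ and $\ndeg$.
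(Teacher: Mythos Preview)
Your proposal is correct; in fact your closing remark --- that $\Omega$ is a group isomorphism from $(\Imm[S^{3},\R^{4}],\#)$ onto $\Z\oplus\Z$, so that $\Omega$ of an $n$-fold connected sum is $n$ times $\Omega$ --- \emph{is} the paper's entire proof, which consists of the single line ``This holds because $\Omega$ is additive under connected sums.'' Your more explicit route via boundary-connect-summing singular Seifert surfaces and invoking additivity of $\sigma$, $\sharp\Sigma^{2}$ and $\ndeg$ in Theorem~\ref{thm:e-t} is valid but more than the paper does; and your orientation worry is harmless here, since $\Gamma\subset S^{3}$ acts by orientation-preserving isometries, so $\pi$ is orientation-preserving on every sheet and all $n$ copies of $\beta$ are attached with the same sign.
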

This holds because $\Omega$ is additive under connected sums. 

Recall that $t \co S^3 \looparrowright \R^4$ denotes the restriction of the immersion $T \co (S^2\times S^2)_{\circ} \looparrowright \R^4$ to the boundary. The Smale invariant $\Omega(t)$ of $t$ can be computed by the formula \eqref{eq:smale}. Since $(S^2\times S^2)_{\circ} \looparrowright \R^4$ is an immersion, the normal degree and the Hirzebruch defect of $t$ are
\[
\ndeg(t)=\chi\left((S^2\times S^2)_{\circ}\right)=3,
\]
\[
\hdef(t)=-3\sigma \left((S^2\times S^2)_{\circ}\right)-\sharp\Sigma^2\left((S^2\times S^2)_{\circ} \looparrowright \R^4\right)=0
\]
see Remark~\ref{rem:hughes}~(b). Therefore, the Smale invariant of $t$ is
\[
\Omega(t)=(2,-1).
\]
Choose an immersion $s \co S^3 \looparrowright \R^4$ of Smale invariant $\Omega(s)=(-2,1)$ (namely $s=-t \in \Imm[S^3,\R^4]$). We will apply Lemma~\ref{lem:step3} to the case where $\alpha =f'_n$ and $\beta =s$.

Let $\pi'_n$ denote the connected sum of $\Z_n$-covering $S^3 \to L(n,1)$ and $n$ copies of identity map of $S^3$;
\[
\pi_n' \co S^3\#(S^3)^{\#n} \to L(n,1)\# S^3.
\]
Composing $\pi_n'$ with $(\kappa_{{A},n}\# t)\# s$ we obtain an immersion 
\[
f_n '' :=\left((\kappa_{{A},n}\# t)\# s\right)\circ \pi_n' \co  S^3\#(S^3)^{\#n} \to L(n,1)\# S^3 \lo \R^4,
\]
which corresponds to the immersion $\alpha'$ in Lemma~\ref{lem:step3}.
Since $s=-t$, the immersion $(\kappa_{{A},n}\# t)\# s \co \left(L(n,1)\# S^3\right)\# S^3 \looparrowright \R^4$ is regularly homotopic to the immersion $\kappa_{{A},n} \co L(n,1) \looparrowright \R^4$. In addition, $\pi_n' \co S^3\#(S^3)^{\#n} \to L(n,1)\# S^3$ is regularly homotopic to a $\Z_n$-cover, so the immersion $f_n''$ is regularly homotopic to the immersion $f_n$ which we constructed in Section~\ref{sec:intr} via the plumbing. Therefore, by Lemma~\ref{lem:step3},
\[
\Omega(f_n)=\Omega(f_n'')=(n^2+2n-1,-n)+n(-2,1)=(n^2-1,0).
\]

\subsection{Type $D$}

We can compute the Smale invariant of $g_n$ in a similar way as for $f_n$.

\textit{Step~1.}
We construct a singular Seifert surface for the immersion that is obtained by modifying $g_n$ (see Remark~\ref{rem:d} (a)). Let $E_{-n}^*$ denote the $4$-manifold obtained by replacing a $1$-handle with a $2$-handle with framing $0$ in the Kirby diagram representing $E(\eta_{-n})$ (see Figure~\ref{fig:ex_kirby} (b)). Since this replacement corresponds to replacing $S^1 \times D^3$ with $D^2 \times S^2$, it does not change the boundary. The $4$-manifold $P(D_{n+2},2) \na (S^2 \times S^2)_{\circ}$ is represented as the boundary connected sum of $E_{-n}^*$ and $(n+2)$ copies of $\C P^2_{\circ}$ (see Figure~\ref{fig:dic_1});
\begin{equation}\label{eq:d_01}
P(D_{n+2},2) \na (S^2 \times S^2)_{\circ} \approx E_{-n}^*\na (\C P^2_{\circ})^{\na (n+2)}.
\end{equation}
\begin{figure}[]
\begin{center}
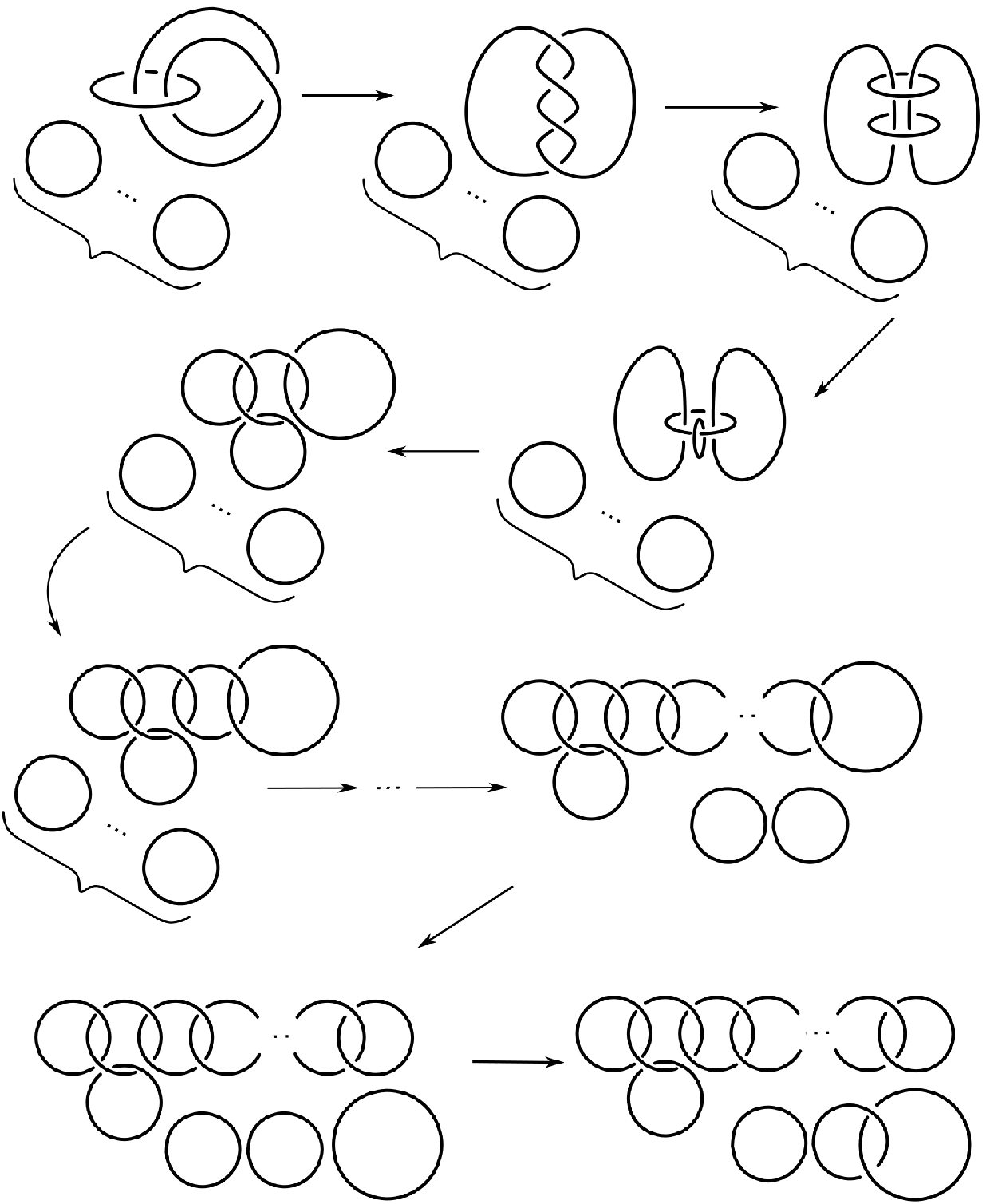
\end{center}
\caption{Proof of \eqref{eq:d_01}}\label{fig:dic_1}
\end{figure}
By Example~\ref{ex:covering_2}, we obtained the $\dic_n$-covering
\begin{equation}
\Pi_n \circ \Phi |_{\pa} \co S^3 \to L(2n,1) \to S^3/\dic_n.
\end{equation}
Thus, we construct a branched $\Z_2$-covering over $E_{-n}^* \na (\C P^2_{\circ})^{\na (n+2)}$ defined by an extension of the $\dic_n$-covering $\Pi_n \circ \Phi |_{\pa}$. In order to construct the $\Z_2$-covering, we will use the following Lemma:

\begin{lem}\label{lem:a}
We can extend a $\Z_2$-covering $\Phi|_{\pa} \co L(2n,1)\to S^3/\dic_n$ to a branched $\Z_2$-covering $X'\to E_{-n}^*$, where $X'$ is given by the following Kirby diagram; 

\[
\begingroup%
  \makeatletter%
  \providecommand\color[2][]{%
    \errmessage{(Inkscape) Color is used for the text in Inkscape, but the package 'color.sty' is not loaded}%
    \renewcommand\color[2][]{}%
  }%
  \providecommand\transparent[1]{%
    \errmessage{(Inkscape) Transparency is used (non-zero) for the text in Inkscape, but the package 'transparent.sty' is not loaded}%
    \renewcommand\transparent[1]{}%
  }%
  \providecommand\rotatebox[2]{#2}%
  \ifx\svgwidth\undefined%
    \setlength{\unitlength}{234.84491703bp}%
    \ifx\svgscale\undefined%
      \relax%
    \else%
      \setlength{\unitlength}{\unitlength * \real{\svgscale}}%
    \fi%
  \else%
    \setlength{\unitlength}{\svgwidth}%
  \fi%
  \global\let\svgwidth\undefined%
  \global\let\svgscale\undefined%
  \makeatother%
  \begin{picture}(1,0.30018272)%
    \put(0,0){\includegraphics[width=\unitlength]{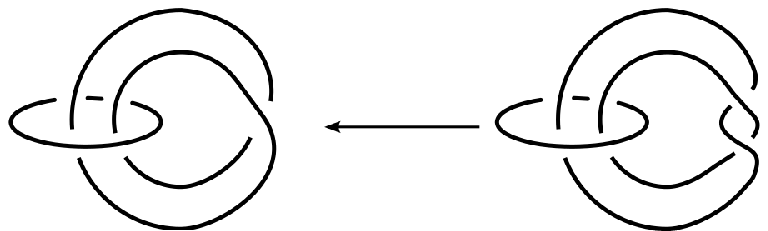}}%
    \put(0.32443231,0.26634295){\color[rgb]{0,0,0}\makebox(0,0)[lb]{\smash{$\sb{-n-2}$}}}%
    \put(0.48179098,0.14716068){\color[rgb]{0,0,0}\makebox(0,0)[lb]{\smash{$2:1$}}}%
    \put(0.93596537,0.22682372){\color[rgb]{0,0,0}\makebox(0,0)[lb]{\smash{$\sb{-n-2}$}}}%
    \put(0.94800924,0.15456082){\color[rgb]{0,0,0}\makebox(0,0)[lb]{\smash{$\sb{-n-2}$}}}%
    \put(-0.00032934,0.27400021){\color[rgb]{0,0,0}\makebox(0,0)[lb]{\smash{$Y'=E_{-n}^*$}}}%
    \put(0.06170241,0.07715539){\color[rgb]{0,0,0}\makebox(0,0)[lb]{\smash{$\sb{0}$}}}%
    \put(0.65063329,0.07679777){\color[rgb]{0,0,0}\makebox(0,0)[lb]{\smash{$\sb{0}$}}}%
    \put(0.6409634,0.2711632){\color[rgb]{0,0,0}\makebox(0,0)[lb]{\smash{$X'$}}}%
  \end{picture}%
\endgroup%

\]
\end{lem}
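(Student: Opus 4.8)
The plan is to derive the branched cover $X'\to E_{-n}^{*}$ from the unramified $\Z_{2}$-cover $\Phi\co E(\xi_{-2n})\to E(\eta_{-n})$ of Example~\ref{ex:covering} by performing, simultaneously in the cover and the base, the replacement of a $1$-handle by a $0$-framed $2$-handle.

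First I would recall the Kirby picture of $\Phi$ from Figure~\ref{fig:covering_dic}: the base $E(\eta_{-n})$ is drawn with a single $1$-handle (a dotted circle) through which runs a $2$-handle framed $-n-2$, and the cover $E(\xi_{-2n})$ is obtained by the recipe of \S\ref{subsec:cover}, where the $1$-handle, identified with $S^{1}\times D^{3}$, is double-covered by the connected cover $(z,w)\mapsto(z^{2},w)$ and the $(-n-2)$-framed $2$-handle lifts to the two $(-n-2)$-framed $2$-handles shown for $E(\xi_{-2n})$.

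Next I would examine what the handle exchange does to the covering. Replacing the $1$-handle by a $0$-framed $2$-handle amounts to replacing $S^{1}\times D^{3}$ by $D^{2}\times S^{2}$, glued along the common boundary $S^{1}\times S^{2}$; it does not affect the remaining $2$-handles, nor the boundary. On $S^{1}\times S^{2}$ the deck transformation of the restricted cover is the free involution rotating the $S^{1}$-factor by $\pi$; this involution extends over $D^{2}\times S^{2}$ as rotation of the $D^{2}$-factor by $\pi$, an orientation-preserving involution with fixed-point set the core sphere $\{0\}\times S^{2}$, whose quotient is again $D^{2}\times S^{2}$ with quotient map the branched double cover $(w,p)\mapsto(w^{2},p)$. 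Since the restricted cover of $S^{1}\times D^{3}$ is connected, its replacement in $E(\xi_{-2n})$ is a single copy of $D^{2}\times S^{2}$. Carrying out this replacement in Figure~\ref{fig:covering_dic} turns the dotted circle into a $0$-framed circle both in the base and in the cover and leaves the other $2$-handles alone, giving exactly the diagrams for $Y'=E_{-n}^{*}$ and $X'$ in the statement together with a branched $\Z_{2}$-cover $X'\to E_{-n}^{*}$ whose branch locus is the core sphere of the new $0$-framed $2$-handle (and is disjoint from every other handle).

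Finally I would verify the boundary condition. The exchange is supported in the interior, so a collar of the boundary and the covering map over it are untouched; hence $\pa X'=\pa E(\xi_{-2n})=L(2n,1)$, $\pa E_{-n}^{*}=\pa E(\eta_{-n})=S^{3}/\dic_{n}$, and $X'\to E_{-n}^{*}$ restricts on the boundary to $\Phi|_{\pa}\co L(2n,1)\to S^{3}/\dic_{n}$, which is the required extension. I expect the delicate point to be this handle-exchange analysis of the cover---that the free $\Z_{2}$-action on $S^{1}\times D^{3}$ caps off to a $\Z_{2}$-action on $D^{2}\times S^{2}$ with the core $2$-sphere as branch locus and that the corresponding piece of the cover stays connected---after which reading off the Kirby diagrams via \S\ref{subsec:cover} is routine.
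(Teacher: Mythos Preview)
Your proposal is correct and follows essentially the same route as the paper: start from the unramified $\Z_2$-cover $\Phi\co E(\xi_{-2n})\to E(\eta_{-n})$ in Figure~\ref{fig:covering_dic}, replace the $1$-handle by a $0$-framed $2$-handle simultaneously in base and cover, and extend the cover over the new $D^2\times S^2$ by $(z,w)\mapsto(z^2,w)$, noting that the boundary covering is unchanged. Your discussion of the deck transformation extending from the free rotation on $S^1\times D^3$ to the rotation on $D^2\times S^2$ with fixed core sphere is a slightly more explicit phrasing of the same construction, and the identification of the branch locus you give is exactly what the paper records separately as Lemma~\ref{lem:c}.
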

\begin{proof}
The above diagrams of $X'$ and $Y'$ are given by replacing $1$-handles of $X$ and $Y$ in Figure~\ref{fig:covering_dic} with $2$-handles with framing $0$. Since this operation dose not change boundaries, we obtain the same $\Z_2$-covering $\pa X' \to \pa Y'=S^3/\dic_n$ as $\Phi|_{\pa}$. It only remains to extend this map in the interior. For the inside of two $2$-handles with framing $-n-2$ of $X'$, the extension is defined by trivial double covering over the $2$-handle with framing $-n-2$ of $Y'$ in. In addition, for the part of $2$-handle with framing $0$ of $X'$, the extension is defined by the map $D^2 \times S^2 \to D^2 \times S^2$, $(z,w)\mapsto (z^2,w)$ (see the right side of Figure~\ref{fig:k}). This extension is well-defined on the attaching part.
\end{proof}

\begin{lem}\label{lem:b}
Let $X_n$ denote the $(S^2 \times S^2)_{\circ}$ or $(S^2 \tilde{\times} S^2)_\circ$ according to whether $n$ is even or odd (see Example~\ref{ex:k_move} (c) and (d)). The $\Z_2$ cover over $E_{-n}^*$ is diffeomorphic to $E(\xi_{-2n})\na X_n$.
\end{lem}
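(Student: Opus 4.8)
The plan is to simplify, by Kirby moves, the diagram of $X'$ given in Lemma~\ref{lem:a}, following step by step the reduction of Figure~\ref{fig:covering_dic_2} (which identifies the genuine $\Z_2$-cover of $E(\eta_{-n})$ with $E(\xi_{-2n})$), and to account for the one structural difference caused by $E_{-n}^*$ carrying a $0$-framed $2$-handle in place of a $1$-handle. Concretely, the diagram of $X'$ is $D^4$ with a $0$-framed $2$-handle $\tilde U$ --- the lift of the $0$-framed handle of $E_{-n}^*$ under the branched squaring map $(z,w)\mapsto(z^2,w)$ --- and two $(-n-2)$-framed $2$-handles $\tilde C_1,\tilde C_2$, the two lifts of the $(-n-2)$-framed handle of $E_{-n}^*$. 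Because that handle linked the $0$-framed one of $E_{-n}^*$ with linking number $2$, each $\tilde C_i$ links $\tilde U$ once; and $\tilde C_1,\tilde C_2$ link one another with linking number $-2$, as one sees from the fact that replacing $\tilde U$ by a $1$-handle must yield $E(\xi_{-2n})$.

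I would then run the moves of Figure~\ref{fig:covering_dic_2}: sliding $\tilde C_2$ over $\tilde C_1$ turns $\tilde C_2$ into a handle of framing $-2n-2$ threaded through a full $(-n-2)$-twist region, while $\tilde C_1$ stays $(-n-2)$-framed. Where Figure~\ref{fig:covering_dic_2} next produces a $1$-handle/$2$-handle cancelling pair that disappears, here the $0$-framed handle $\tilde U$ plays the role of the $1$-handle and survives, so after the corresponding isotopies one is left with the $(-2n-2)$-framed handle $\tilde C_2$ --- which, pushed through the twist region, becomes an unknot of framing $-2n$ split off from everything else, that is, $E(\xi_{-2n})$ --- together with $\tilde U$ and $\tilde C_1$, which form a Hopf link with framings $0$ and $c$ for some integer $c$. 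Tracking the twist contributions gives $c\equiv n\pmod 2$, the parity coming from $-n-2\equiv n\pmod 2$. Sliding the $c$-framed component of this Hopf link over the $0$-framed one changes $c$ by $\pm2$ at each step and keeps it a Hopf link, hence reduces $c$ to $0$ or $1$ according to its parity; by Example~\ref{ex:k_move}~(c),(d) the leftover is $(S^2\times S^2)_\circ$ when $n$ is even and $(S^2\tilde\times S^2)_\circ$ when $n$ is odd. This gives $X'\approx E(\xi_{-2n})\na X_n$.

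The step I expect to be the main obstacle is the bookkeeping of framings and linking numbers through the slides, in particular the two claims $\operatorname{lk}(\tilde C_1,\tilde C_2)=-2$ and $c\equiv n\pmod 2$. This is, however, essentially the computation already displayed in Figure~\ref{fig:covering_dic_2}, now carried out with the extra $0$-framed strand $\tilde U$ dragged along, so no genuinely new difficulty appears. As a check, the parity statement is equivalent to the intersection form of $X'$, namely $\bigl(\begin{smallmatrix}0&1&1\\1&-n-2&-2\\1&-2&-n-2\end{smallmatrix}\bigr)$, being even exactly when $n$ is even, which is immediate, and it also matches the intersection form of $E(\xi_{-2n})\na X_n$.
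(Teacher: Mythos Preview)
Your proposal is correct and follows essentially the same route as the paper: start from the $X'$ diagram of Lemma~\ref{lem:a}, run the slides of Figure~\ref{fig:covering_dic_2} with the $0$-framed $2$-handle $\tilde U$ carried along in place of the $1$-handle, split off an unknotted $-2n$-framed component (giving $E(\xi_{-2n})$), and be left with a Hopf link on $\tilde U$ and $\tilde C_1$.

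Two small differences worth noting. First, the paper does not hedge on the leftover framing: using the $0$-framed Hopf-link separation trick (\cite[Lemma~4.5]{kirby}) it obtains the remaining Hopf link with framings $0$ and $-n-2$ exactly, and then appeals to Example~\ref{ex:k_move}~(c),(d). Your parity argument for $c$ is of course sufficient, but you can in fact pin down $c=-n-2$ directly. Second, your independent check via the intersection form of $X'$ is a nice addition not in the paper; incidentally, computing $\det$ of your matrix and matching it to $\det\bigl((-2n)\oplus X_n\bigr)=2n$ is exactly what confirms $\operatorname{lk}(\tilde C_1,\tilde C_2)=-2$, which you asserted on more heuristic grounds.
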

\begin{proof}
The proof is given by Figure~\ref{fig:lemma_b}. In the second step we use the fact that a component which is Hopf-linked by an unknotted circle with framing $0$ can be separated from other components by a sequence of handle slides (see \cite[Lemma~4.5]{kirby}).
\begin{figure}[]
\begin{center}
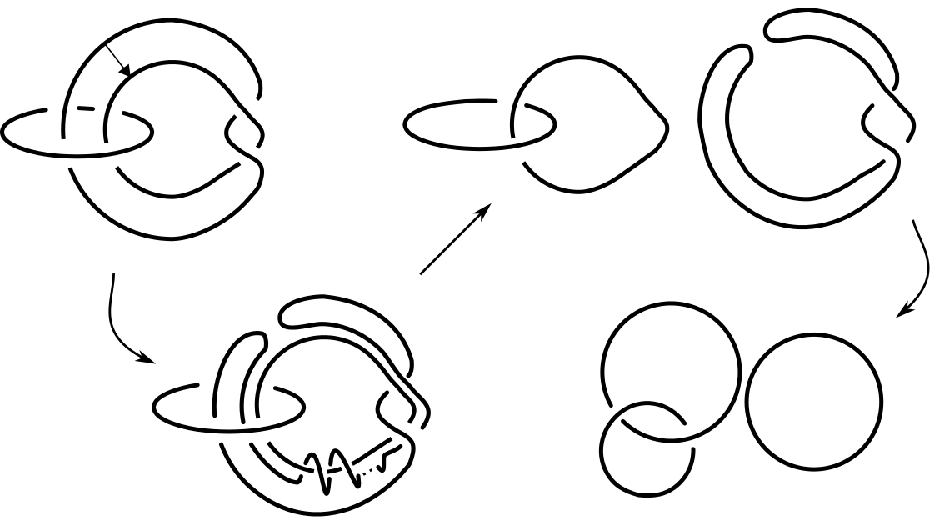
\end{center}
\caption{Proof of Lemma~\ref{lem:b}}\label{fig:lemma_b}
\end{figure}
\end{proof}

By Lemmas~\ref{lem:a} and \ref{lem:b}, we obtain the branched $\Z_2$-covering
\[
\tilde{\Phi}_1 \co E(\xi_{-2n}) \na X_n \to  E_{-n}^* 
\]
that defines an extension of the $\Z_2$-covering $\pa E(\xi_{-2n})\to \pa E(\eta_{-n})=\pa E_{-n}^*$.

Next, we consider the branched locus of $\tilde{\Phi}_1$. Let $h_1$ denote the $2$-handle with framing $0$ in the Kirby diagram representing $E_{-n}^*$.
\begin{lem}\label{lem:c}
The branched locus of $\tilde{\Phi}_1$ corresponds to the core of the $2$-handle $h_1$. 
\end{lem}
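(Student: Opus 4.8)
The plan is to trace through the construction of $\tilde{\Phi}_1$ supplied by the proofs of Lemmas~\ref{lem:a} and~\ref{lem:b}, and to locate precisely the set over which this $\Z_2$-covering fails to be a local diffeomorphism. Since the branch locus asked about lies in the target $E_{-n}^*$, and the target is never altered by the handle slides used in the proof of Lemma~\ref{lem:b} (those only rearrange the Kirby diagram of the source $E(\xi_{-2n})\na X_n$), it suffices to inspect the piecewise description of $\tilde{\Phi}_1$ over the fixed handle decomposition of $E_{-n}^*$.

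First I would recall from the proof of Lemma~\ref{lem:a} that $\tilde{\Phi}_1$ is assembled from two kinds of pieces. Over each of the two $2$-handles of $E_{-n}^*$ with framing $-n-2$, the covering is a \emph{trivial} double cover, i.e.\ two disjoint copies of a $2$-handle mapping down by the identity; this piece is unramified and contributes no branch points. Over the remaining $2$-handle $h_1$ (the one with framing $0$), which together with the $0$-handle is identified with $D^2\times S^2 = D^4\cup h_1$, the covering is the model map $\mu\co D^2\times S^2\to D^2\times S^2$, $\mu(z,w)=(z^2,w)$. Hence \emph{all} of the ramification of $\tilde{\Phi}_1$ is carried by this one piece over $h_1$.

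Next I would analyse $\mu$ directly. It is a genuine $2$-fold cover away from $\{0\}\times S^2$, and is branched exactly along $\{0\}\times S^2$; the deck involution is $(z,w)\mapsto(-z,w)$, whose fixed-point set is precisely $\{0\}\times S^2$. It remains to identify this sphere inside $E_{-n}^*$. In the identification $D^4\cup h_1\cong D^2\times S^2$, the $D^2$-factor is the cocore of $h_1$, so $\{0\}\times S^2$ is the $2$-sphere obtained by capping the core disk of $h_1$ with a Seifert disk (pushed into $D^4$) of the $0$-framed unknotted attaching circle of $h_1$. This is what the statement calls ``the core of the $2$-handle $h_1$'', and it is the branch locus of $\tilde{\Phi}_1$.

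The only point requiring care is the bookkeeping of which circle direction gets doubled. Over the $1$-handle of $E(\eta_{-n})$, the unramified cover $\Phi$ doubles the $S^1$-direction of $D^4\cup(1\text{-handle})=S^1\times D^3$; after replacing $S^1\times D^3$ by $S^2\times D^2=D^4\cup h_1$ using the boundary identification $\pa(S^1\times D^3)=S^1\times S^2\cong S^2\times S^1=\pa(S^2\times D^2)$, that same boundary cover now doubles the $\pa D^2$-direction of $h_1$, which is exactly why the extension over $h_1$ is branched along the core sphere rather than being an honest covering. Once this direction-matching (forced by the boundary identification defining $E_{-n}^*$) is pinned down, the conclusion is immediate; I do not expect any genuine obstacle beyond this identification, which is the subtle part of the argument.
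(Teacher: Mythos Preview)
Your proposal is correct and follows essentially the same route as the paper: identify $D^4\cup h_1$ with $D^2\times S^2$, observe that the extension of $\Phi|_{\partial}$ over this piece is the model map $(z,w)\mapsto(z^2,w)$, and read off the branch locus as $\{0\}\times S^2$. You add a bit more detail than the paper does (explicitly noting that the trivial covers over the $(-n-2)$-framed $2$-handles contribute no branching, and spelling out the direction-matching between the $1$-handle of $E(\eta_{-n})$ and the $2$-handle $h_1$ of $E_{-n}^*$), but the substance is the same.
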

\begin{proof}
We identify $D^4 \cup h_1$ with $D^2 \times S^2$. In \S\ref{subsec:cover}, the covering map $\Phi$ is defined by $(z,w)\mapsto (z^2,w)$ over $S^1 \times D^3 = D^4 \cup 1$-handle in $X$. Thus the map $\tilde{\Phi}_1|_{\pa}= \Phi|_{\pa}$ is defined by $(z,w)\mapsto(z^2,w)$ over $\pa D^2 \times S^2=S^1 \times S^2$, and $\tilde{\Phi}_1 \co D^2 \times S^2 \to D^2 \times S^2$ is also given by $(z,w)\mapsto(z^2,w)$ (see Figure~\ref{fig:k}). The rank of $d \tilde{\Phi}_1$ equals $4$ outside the $0$-section $\{0\} \times S^2$ in $D^2 \times S^2$ and equals $2$ at any point on the $\{0\} \times S^2$. 
\begin{figure}[]
\begin{center}
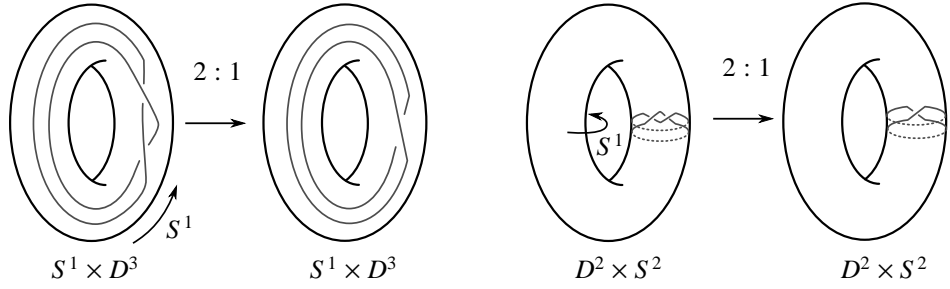
\end{center}
\caption{$\Z_2$-coverings and attaching spheres of $1$-handle and $2$-handle with framing $0$}\label{fig:k}
\end{figure}
\end{proof}

Let $\Phi_1$ denote a branched covering over $E_{-n}^* \na (\C P^2_{\circ})^{\na (n+2)}$ which is defined by taking connected sum of $\tilde{\Phi}_1$ with the trivial $\Z_2$-covering $(\C P^2_{\circ})^{\na 2(n+2)} \to (\C P^2_{\circ})^{\na (n+2)}$; 
\[
\Phi_1 \co E(\xi_{-2n}) \na X_n \na (\C P^2_{\circ})^{\na 2(n+2)} \to  E_{-n}^* \na (\C P^2_{\circ})^{\na (n+2)}.
\]
Let 
\[
\Phi_2 \co E(\xi_{-1})\na (X_n)^{\na 2n}\na (\C P^2_{\circ})^{\na 4n(n+2)} \to E(\xi_{-2n})\na X_n \na (\C P^2_{\circ})^{\na 2(n+2)}
\]
denote the $2n$-fold branched covering obtained by taking the connected sum of the covering $\Pi \co E(\xi_{-1})\to E(\xi_{-2n})$ with the trivial $2n$-fold covering $(X_n)^{\na 2n}\na (\C P^2_{\circ})^{\na 4n(n+2)} \to (X_n)\na (\C P^2_{\circ})^{\na 2(n+2)}$. We obtain the branched $\dic_n$-covering
\[
\Pi'_{D,n} \co E(\xi_{-1})\na (X_n)^{\na 2n}\na (\C P^2_{\circ})^{\na 4n(n+2)} \xrightarrow{\Phi_2} E(\xi_{-2n})\na (X_n) \na (\C P^2_{\circ})^{\na 2(n+2)} \xrightarrow{\Phi_1} E_{-n}^* \na (\C P^2_{\circ})^{\na (n+2)},
\]
which is defined by composing $\Phi_1$ and $\Phi_2$ (see Remark~\ref{rem:d} (b)). 


Recall that $T\co (S^2 \times S^2)_{\circ} \lo \R^4$ denotes the immersion that has been chosen in \S\ref{subsec:A}. Let $K'_{D,n}$ be an immersion defined as the connected sum of the immersion $K_{D,n}\co P(D_{n+2},2) \looparrowright \R^4$ (constructed in the beginning of this section) and $T \co (S^2 \times S^2)_{\circ}\looparrowright \R^4$. We obtain the map $G_n' \co E(\xi_{-1})\na (X_n)^{\na 2n}\na (\C P^2_{\circ})^{\na 4n(n+2)} \to \R^4$ by composing $\Pi'_{D,n}$ and $K'_{D,n}$;
\begin{multline*}
G_n' := K'_{D,n} \circ \Pi'_{D,n} \co E(\xi_{-1})\na (X_n)^{\na 2n}\na (\C P^2_{\circ})^{\na 4n(n+2)} \xrightarrow{\Pi'_{D,n}} E_{-n}^* \na (\C P^2_{\circ})^{\na (n+2)} \\
\approx P(D_{n+2},2) \na (S^2 \times S^2)_{\circ} \stackrel{K'_{D,n}}{\looparrowright}  \R^4.
\end{multline*}
$G_n'$ is a singular Seifert surface for the immersion 
\[
g_n' \co S^3 \to \pa \left(P(D_{n+2},2)\na (S^2 \times S^2)_{\circ}\right)=S^3/\dic_n \lo \R^4.
\]
The situation is summarized in the following commutative diagram; 
\[\xymatrix{
S^3 \ar@/^10mm/[rrr]^(.65){g'_n} \ar@{^{(}->}[d] \ar[r]_(.25){\text{universal covering}}&\pa \left(P(D_{n+2},2)\na (S^2 \times S^2)_{\circ}\right)=S^3/\dic_n \ar@{^{(}->}[d] \ar[rr]^(.65){\kappa'_{D,n}= \kappa_{D,n}\# t}&&\R^4\\
E(\xi_{-1})\na (X_n)^{\na 2n}\na (\C P^2_{\circ})^{\na 4n(n+2)} \ar@/^-20mm/[rrru]_(.65){G'_n} \ar[r]_ (.45){\Pi'_{D,n}}& E_{-n}^* \na (\C P^2_{\circ})^{\na (n+2)}\approx P(D_{n+2},2) \na (S^2 \times S^2)_{\circ} \ar[rru]_(.65){K'_{D,n}}
}\]
The immersion $g_n'$ is defined as a modification of the immersion $g_n$.

\textit{Step~2.}
We compute the Smale invariant of $g'_n$. The normal degree of $g'_n$ is
\begin{align*}
\ndeg(g'_n)=& \deg (S^3\to S^3/\dic_n)\cdot \ndeg(K'_{D,n})\\
=& \deg (S^3\to S^3/\dic_n)\cdot \chi\left(P(D_{n+2},2)\na (S^2 \times S^2)_{\circ}\right)\\
=& 4n\cdot (n+5).
\end{align*}
Since the signatures of $S^2\times S^2$ and $S^2 \tilde{\times} S^2$ equal $0$, the signature of $E(\xi_{-1})\na (X_n)^{\na 2n} \na (\C P^2)^{\na 4n(n+2)}$ is
\begin{align*}
\sigma\left(E(\xi_{-1})\na (X_n)^{\na 2n} \na (\C P^2)^{\na 4n(n+2)}\right) =& -1+0+4n(n+2)\\
=& 4n^2+8n-1.
\end{align*}
By \cite[Lemma~3.1]{e-t}, Lemma~\ref{lem:c} and Lemma~\ref{lem:purturb} below, the branched loci of $\Phi_1$ and $\Phi_2$ are contained in some open neighborhoods of the core of the corresponding $2$-handles $h_1$ and $h_2$ drawn in Figure~\ref{fig:d_02} (corresponding to the thickened circles). 
\begin{figure}[]
\begin{center}
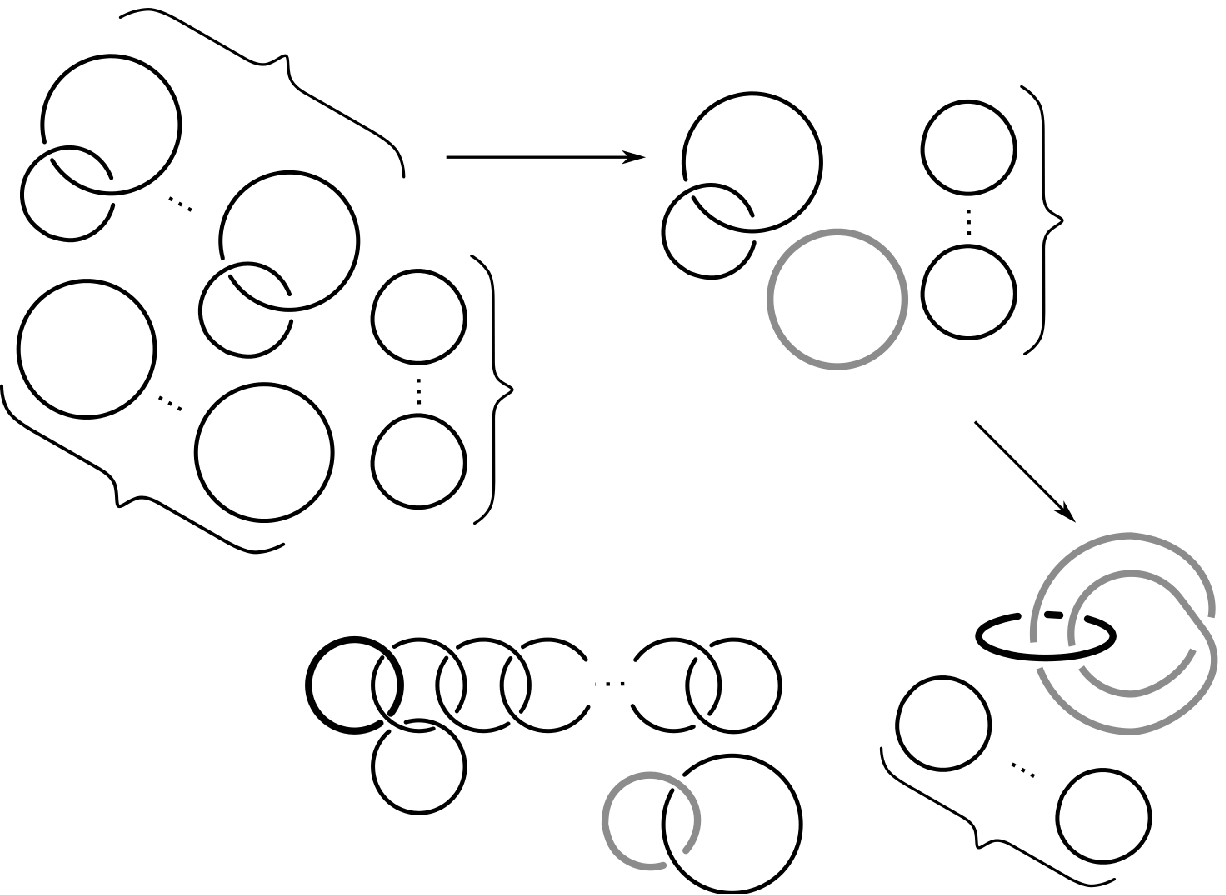
\end{center}
\caption{The branched loci of $\Phi_1$ and $\Phi_2$}\label{fig:d_02}
\end{figure}
Therefore, the branched loci of $\Phi_1$ and $\Phi_1 \circ \Phi_2$ do not overlap in $E_{-n}^* \na (\C P^2_{\circ})^{\na (n+2)} \approx P(D_{n+2},2) \na (S^2 \times S^2)_{\circ}$ (see Figure~\ref{fig:d_02}), so the algebraic number of rank $2$ points of composite $\Phi_1 \circ \Phi_2$ is given by $\deg(\Phi_2)\cdot \sharp\Sigma^{2}(\Phi_1)+\sharp\Sigma^{2}(\Phi_2)$ by Lemma~\ref{lem:2com}. Thus, the algebraic number of rank $2$ points of the map $G'_n= K'_{D,n} \circ \Pi'_{D,n}$ is
\begin{align*}
\sharp\Sigma^2(G_n')
&= \sharp\Sigma^2(K'_{D,n} \circ \Pi'_{D,n})=\sharp\Sigma^2(\Pi'_{D,n})\\
&=\sharp\Sigma^2(\Phi_1 \circ \Phi_2)=\deg (\Phi_2) \cdot \sharp\Sigma^2(\Phi_1)+\sharp\Sigma^2(\Phi_2)\\
&= 2n \cdot 0-(2n)^2+1=-4n^2+1.
\end{align*}
Thus the Hirzebruch defect of $g'_n$ is
\begin{align*}
\hdef(g'_n)
&= -3\sigma\left(E(\xi_{-1})\na (X_n)^{\na 2n} \na (\C P^2)^{\na 4n(n+2)}\right)-\sharp\Sigma^2(G_n')\\
&= -8n^2-24n+2
\end{align*}
by \eqref{eq:hirzebruch}. Substituting these values into the formula \eqref{eq:smale}, we have
\[
\Omega(g_n')=\left(4n(n+5)-1, \frac{-(-8n^2-24n+2)-2(4n^2+20n-1)}{4} \right)=(4n^2+20n-1,-4n).
\]

\textit{Step~3.}
By Lemma~\ref{lem:step3} and the proof of type A, the difference of Smale invariants of $g_n$ and $g'_n$ is
\[
\Omega(g_n)=\Omega(g'_n)-4n\cdot \Omega(t)=(4n^2+12n-1,0).
\]

This completes the proof, and there remains only the following: 

\begin{lem}\label{lem:purturb}
Let $\Phi \co D^2 \times S^2 \to D^2 \times S^2$ be a map given by $(z,w) \mapsto (z^2,w)$. Then, $\sharp\Sigma^2(\Phi)=0$.
\end{lem}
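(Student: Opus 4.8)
The plan is to extend $\Phi$ to a smooth map of the \emph{closed} manifold $\C P^1\times S^2$, apply Stingley's formula (Theorem~\ref{thm:stin}) there, and then use a symmetry to halve the resulting count. This $\Phi$ is the trivial-bundle analogue of the maps $\Pi_k\co E(\xi_{-1})\to E(\xi_{-k})$ of \cite[\S3]{e-t}, and the answer $0$ will reflect the vanishing of $p_1$ of the closure $\C P^1\times S^2=E(\xi_0)\cup_\pa E(\xi_0)$.

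First I would record the local structure of $\Phi$. Writing $D^2=\{z\in\C:|z|\le 1\}$, we have $\rank d\Phi_{(z,w)}=4$ when $z\ne 0$ and $\rank d\Phi_{(0,w)}=2$, so $\Sigma^2(\Phi)=\{0\}\times S^2$, and $\Phi$ is a local diffeomorphism near $\pa(D^2\times S^2)=S^1\times S^2$ because $z\mapsto z^2$ is one near $|z|=1$. Thus $\Phi$ is a (non-generic) smooth map of the kind treated in \S\ref{sbsec:thom}, and $\sharp\Sigma^2(\Phi)$ is defined after a small perturbation supported in $\inter(D^2\times S^2)$, as in Remark~\ref{rem:umbilic}.

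Next, regard $S^2=\C P^1=D^2_-\cup_{S^1}D^2_+$ with $D^2_-$ carrying the coordinate $z$ and $D^2_+$ the coordinate $u=1/z$, both orientation-preserving for the complex orientation (the transition $u=1/z$ is holomorphic). Let $\phi\co\C P^1\to\C P^1$, $[z_0:z_1]\mapsto[z_0^2:z_1^2]$; it restricts to $z\mapsto z^2$ on $D^2_-$ and to $u\mapsto u^2$ on $D^2_+$, has branch set $\{0,\infty\}$, and maps each of $D^2_-,D^2_+$ onto itself. Set $\hat\Phi:=\phi\times\id_{S^2}\co\C P^1\times S^2\to\C P^1\times S^2$. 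Then $\hat\Phi|_{D^2_-\times S^2}$ is literally $\Phi$, while $\hat\Phi|_{D^2_+\times S^2}$ is identified with $\Phi$ via the orientation-preserving coordinate $u$ in source and target; moreover $\hat\Phi$ is nonsingular near $S^1\times S^2$. Since $\Sigma^2(\hat\Phi)=(\{0\}\sqcup\{\infty\})\times S^2$ with the two components lying in the disjoint sets $\inter(D^2_-\times S^2)$ and $\inter(D^2_+\times S^2)$, perturbing $\hat\Phi$ separately and in the same way on each of these, and summing indices, gives
\[
\sharp\Sigma^2(\hat\Phi)=\sharp\Sigma^2\bigl(\hat\Phi|_{D^2_-\times S^2}\bigr)+\sharp\Sigma^2\bigl(\hat\Phi|_{D^2_+\times S^2}\bigr)=2\,\sharp\Sigma^2(\Phi).
\]

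Finally I would apply Theorem~\ref{thm:stin} to a generic perturbation of $\hat\Phi$. As $T(\C P^1\times S^2)$ is a Whitney sum of pullbacks of rank-$2$ bundles over surfaces, $p_1(\C P^1\times S^2)=0$, hence also $\hat\Phi^*p_1(\C P^1\times S^2)=0$, so
\[
\sharp\Sigma^2(\hat\Phi)=\bigl\langle\hat\Phi^*p_1(\C P^1\times S^2)-p_1(\C P^1\times S^2),[\C P^1\times S^2]\bigr\rangle=0,
\]
and combining with the previous display yields $\sharp\Sigma^2(\Phi)=0$. The step needing the most care is the orientation bookkeeping above: one must be sure that $D^2_-$ and $D^2_+$ receive \emph{compatible} orientations from $\C P^1$ and that $u\mapsto u^2$ is the $\Phi$-model with respect to these, so that the two summands enter with the \emph{same} sign rather than cancelling; this is precisely the holomorphy of $u=1/z$. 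A secondary point is the additivity in that display, i.e.\ that the perturbations can be taken disjoint and away from $S^1\times S^2$, where $\hat\Phi$ is already nonsingular.
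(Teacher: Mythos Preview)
Your argument is correct and genuinely different from the paper's. The paper proceeds by an explicit local perturbation: it replaces the $D^2$-factor map $z\mapsto z^2$ by $\tilde\Phi(z)=z^2+\rho_{|z|^2}\,z$ for a suitable bump function $\rho$, checks by direct computation that the Jacobian matrix of $\tilde\Phi$ is never the zero matrix, and concludes that the perturbed $4$-dimensional map $\tilde\Phi\times\id_{S^2}$ has \emph{no} rank $2$ points at all, so the count is $0$. Your approach is the closed-manifold/global one: double $\Phi$ to $\hat\Phi=\phi\times\id$ on $\C P^1\times S^2$, observe that holomorphy of the chart change makes the two halves contribute with the same sign so that $\sharp\Sigma^2(\hat\Phi)=2\,\sharp\Sigma^2(\Phi)$, and then read off $\sharp\Sigma^2(\hat\Phi)=0$ from Theorem~\ref{thm:stin} because $p_1(\C P^1\times S^2)=0$. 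This parallels the computation of $\sharp\Sigma^2(\Pi_k)$ in \cite[\S3]{e-t} and is arguably more conceptual: it explains the $0$ as the vanishing of a characteristic number rather than as the outcome of a hand-built perturbation. The trade-off is that the paper's proof gives something slightly stronger (an actual perturbation with empty $\Sigma^2$), while yours needs Stingley's theorem and the additivity/orientation bookkeeping you flag; both points you handle correctly.
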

\begin{proof}
Let $\rho \co [0,1]\to [0,1]$, $t\mapsto \rho_t$ be a smooth function such that for a small positive number $c$,
\begin{enumerate}[(i)]
\item $\rho_t=1$ for $0 \le t \le c$,
\item $\rho_t=0$ for $\frac{1}{2} \le t \le 1$,
\item $\rho'<0$ for all $c<t<\frac{1}{2}$,
\end{enumerate}
as shown in Figure~\ref{fig:bump}. The function $\rho$ will be use to perturb $\Phi$ with its values near the boundary fixed.
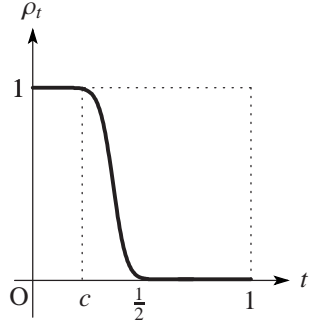
\begin{figure}[]
\begin{center}
\unitlength 0.1in
\begin{picture}( 27.0000, 17.7600)(  1.8500,-18.1800)
\put(13.2000,-1.1500){\makebox(0,0){$\rho_t$}}%
\put(27.0500,-15.5800){\makebox(0,0)[lb]{$t$}}%
\put(12.8000,-5.6800){\makebox(0,0)[rb]{$1$}}%
\put(13.0000,-15.6000){\makebox(0,0)[rt]{O}}%
%
{\color[named]{Black}{%
\special{pn 8}%
\special{pa 1320 1716}%
\special{pa 1320 216}%
\special{fp}%
\special{sh 1}%
\special{pa 1320 216}%
\special{pa 1300 282}%
\special{pa 1320 268}%
\special{pa 1340 282}%
\special{pa 1320 216}%
\special{fp}%
}}%
%
{\color[named]{Black}{%
\special{pn 8}%
\special{pa 1226 1524}%
\special{pa 2640 1524}%
\special{fp}%
\special{sh 1}%
\special{pa 2640 1524}%
\special{pa 2574 1504}%
\special{pa 2588 1524}%
\special{pa 2574 1544}%
\special{pa 2640 1524}%
\special{fp}%
}}%
{\color[named]{Black}{%
\special{pn 20}%
\special{pa 186 1488}%
\special{pa 190 1488}%
\special{pa 196 1490}%
\special{pa 216 1490}%
\special{pa 220 1492}%
\special{pa 246 1492}%
\special{pa 250 1494}%
\special{pa 270 1494}%
\special{pa 276 1496}%
\special{pa 300 1496}%
\special{pa 306 1498}%
\special{pa 330 1498}%
\special{pa 336 1500}%
\special{pa 366 1500}%
\special{pa 370 1502}%
\special{pa 400 1502}%
\special{pa 406 1504}%
\special{pa 436 1504}%
\special{pa 440 1506}%
\special{pa 470 1506}%
\special{pa 476 1508}%
\special{pa 516 1508}%
\special{pa 520 1510}%
\special{pa 560 1510}%
\special{pa 566 1512}%
\special{pa 616 1512}%
\special{pa 620 1514}%
\special{pa 680 1514}%
\special{pa 686 1516}%
\special{pa 780 1516}%
\special{pa 786 1518}%
\special{pa 1310 1518}%
\special{ip}%
\special{pa 1320 518}%
\special{pa 1530 518}%
\special{pa 1536 520}%
\special{pa 1560 520}%
\special{pa 1570 522}%
\special{pa 1576 522}%
\special{pa 1580 524}%
\special{pa 1586 526}%
\special{pa 1600 532}%
\special{pa 1610 538}%
\special{pa 1620 546}%
\special{pa 1626 550}%
\special{pa 1630 556}%
\special{pa 1636 564}%
\special{pa 1640 572}%
\special{pa 1646 580}%
\special{pa 1650 590}%
\special{pa 1656 602}%
\special{pa 1660 616}%
\special{pa 1666 630}%
\special{pa 1670 646}%
\special{pa 1676 664}%
\special{pa 1680 684}%
\special{pa 1686 706}%
\special{pa 1690 730}%
\special{pa 1696 756}%
\special{pa 1700 784}%
\special{pa 1706 814}%
\special{pa 1710 846}%
\special{pa 1716 878}%
\special{pa 1726 946}%
\special{pa 1746 1090}%
\special{pa 1756 1158}%
\special{pa 1760 1192}%
\special{pa 1766 1222}%
\special{pa 1770 1252}%
\special{pa 1776 1280}%
\special{pa 1780 1306}%
\special{pa 1786 1330}%
\special{pa 1790 1352}%
\special{pa 1796 1372}%
\special{pa 1800 1390}%
\special{pa 1806 1406}%
\special{pa 1810 1422}%
\special{pa 1816 1434}%
\special{pa 1820 1446}%
\special{pa 1826 1456}%
\special{pa 1830 1466}%
\special{pa 1836 1474}%
\special{pa 1840 1480}%
\special{pa 1846 1486}%
\special{pa 1850 1492}%
\special{pa 1860 1500}%
\special{pa 1870 1506}%
\special{pa 1886 1512}%
\special{pa 1890 1512}%
\special{pa 1896 1514}%
\special{pa 1900 1514}%
\special{pa 1910 1516}%
\special{pa 1916 1516}%
\special{pa 1920 1518}%
\special{pa 2150 1518}%
\special{fp}%
\special{pa 2160 518}%
\special{pa 2590 518}%
\special{pa 2596 520}%
\special{pa 2740 520}%
\special{pa 2746 522}%
\special{pa 2820 522}%
\special{pa 2826 524}%
\special{pa 2880 524}%
\special{pa 2886 526}%
\special{ip}%
}}%
\put(18.7000,-16.4500){\makebox(0,0){$\frac{1}{2}$}}%
%
{\color[named]{Black}{%
\special{pn 8}%
\special{pa 1360 518}%
\special{pa 2450 518}%
\special{dt 0.045}%
\special{pa 2450 518}%
\special{pa 2450 1518}%
\special{dt 0.045}%
}}%
%
{\color[named]{Black}{%
\special{pn 20}%
\special{pa 2076 1518}%
\special{pa 2450 1518}%
\special{fp}%
}}%
%
{\color[named]{Black}{%
\special{pn 8}%
\special{pa 1576 528}%
\special{pa 1576 1518}%
\special{dt 0.045}%
}}%
\put(15.9000,-16.2800){\makebox(0,0){$c$}}%
\put(12.8000,-5.6800){\makebox(0,0)[rb]{$1$}}%
\put(24.4500,-16.2800){\makebox(0,0){$1$}}%
\end{picture}%
\end{center}
\caption{The function $\rho$}\label{fig:bump}
\end{figure}

The first component of $\Phi$ is identity, and we consider the second component. Define $\tilde{\Phi} \co D^2 \to D^2$ by,
\[
\tilde{\Phi}(z)=z^2+\rho_{\abs{z^2}}\cdot z.
\]
If we put $z=x+iy$, then we have
\[
\tilde{\Phi} \co 
\left(\begin{array}{c} 
x \\ 
y 
\end{array} \right) 
\longmapsto 
\left(\begin{array}{c}
x^2-y^2+\rho_{x^2+y^2}\cdot x \\
2xy+\rho_{x^2+y^2}\cdot y
\end{array} \right).
\]
The Jacobian matrix $J \tilde{\Phi}$ is
\[
J \tilde{\Phi} =
\left( \begin{array}{cc}
\rho_{x^2+y^2}'\cdot 2x^2+\rho_{x^2+y^2}+2x & \rho_{x^2+y^2}'\cdot 2xy-2y\\
\rho_{x^2+y^2}' \cdot2xy+2y & \rho_{x^2+y^2}' \cdot 2y^2 + \rho_{x^2+y^2} +2x
\end{array} \right).
\]
Then, the rank of $J \tilde{\Phi}$ is not zero. Indeed, since $\rho_{\abs{z^2}}=1$ or $0$ on a small neighborhood of $0 \in D^2$ or near the boundary of $D^2$ respectively, we can check that the Jacobian $\abs{J\tilde{\Phi}}$ at any point in those areas are not zero. In addition, if we assume that the Jacobian matrix $J\tilde{\Phi}$ is zero for some $z$ with $c < \abs{z^2} < \frac{1}{2}$, then 
\[
\begin{cases}
\rho_{x^2+y^2}'\cdot 2x^2+\rho_{x^2+y^2}+2x=0 \\
\rho_{x^2+y^2}'\cdot 2xy-2y=0 \\
\rho_{x^2+y^2}' \cdot 2xy+2y=0 \\
\rho_{x^2+y^2}' \cdot 2y^2 + \rho_{x^2+y^2} +2x=0 .
\end{cases}
\]
This implies $x=y=0$. However, this is inconsistent with $c < \abs{z^2} < \frac{1}{2}$. Thus the map $\tilde{\Phi}$ has no rank $2$ points. Since $\tilde{\Phi}$ is homotopic to a map given by $z \mapsto z^2$, we have $\sharp\Sigma^2(\Phi)=0$.
\end{proof}

\begin{rem}\label{rem:d}
In Figure~\ref{fig:dic_1}, the transformation from (a) to (h) proves 
\begin{equation}\label{eq:d_02}
E_{-n}^* \na (\C P^2_{\circ})^{\na (n+1)} \approx P(D_{n+2},2)\na \overline{\C P^2}_{\circ}.
\end{equation}
By the construction of the map $\Pi'_{D,n} =\Phi_1 \circ \Phi_2$ and Figure~\ref{fig:d_02}, it seems to be possible that we can obtain a branched $\dic_n$-covering over $P(D_{n+2},2)$ by deleting one $\overline{\C P^2}$ from \eqref{eq:d_02}, but it is not a ``good'' covering. The reason for that is almost the same as the case of $A_n$. Therefore we consider the immersion $g'_n$ which is defined as a modification of the immersion $g_n$. 

Consider the connected sum of $P(D_{n+2},2)\na \overline{\C P^2}_{\circ}$ and two $\C P^2$'s. Since $\C P^2 \# \C P^2 \# \overline{\C P^2}$ is diffeomorphic to $(S^2 \times S^2) \# \C P^2$ (see \cite[Corollary~4.3]{kirby}), we obtain Figure~\ref{fig:dic_1} (i). Then deleting one $\C P^2$ in (a) and (i), we have \eqref{eq:d_01}.
\end{rem}

\section*{acknowledgments}
The author would like to thank Keiichi Sakai, Masamichi Takase and Atsuko Katanaga for their valuable advices. She also would like to thank Minoru Yamamoto, Kentaro Saji and Kokoro Tanaka for their supports and encouragements.

\end{document}